\documentclass[12pt]{amsart}
\usepackage{fullpage}
\usepackage{units}
\usepackage{amsbsy}
\usepackage{amstext}
\usepackage{amsthm}
\usepackage{amssymb}
\usepackage{setspace}
\usepackage{bbm}
\usepackage[colorlinks,allcolors=blue]{hyperref}
\onehalfspacing

\makeatletter
\theoremstyle{plain}
\newtheorem{thm}{\protect\theoremname}[section]
  \theoremstyle{definition}
  \newtheorem{defn}[thm]{\protect\definitionname}
  \theoremstyle{remark}
  \newtheorem*{rem*}{\protect\remarkname}
  \theoremstyle{remark}
  \newtheorem{rem}[thm]{\protect\remarkname}
  \theoremstyle{plain}
  \newtheorem{cor}[thm]{\protect\corollaryname}
  \theoremstyle{plain}
  \newtheorem{lem}[thm]{\protect\lemmaname}

\makeatother

  \providecommand{\corollaryname}{Corollary}
  \providecommand{\definitionname}{Definition}
  \providecommand{\lemmaname}{Lemma}
  \providecommand{\remarkname}{Remark}
\providecommand{\theoremname}{Theorem}

\DeclareMathOperator{\spa}{span}

\begin{document}
\global\long\def\SS{\mathcal{S}}
\global\long\def\TT{\mathbb{T}}
\global\long\def\norm#1{\left\Vert #1\right\Vert }
\global\long\def\ZZ{\mathbb{Z}}
\global\long\def\RR{\mathbb{R}}
\global\long\def\eps{\varepsilon}
\global\long\def\CC{\mathbb{C}}
\global\long\def\NN{\mathbb{N}}
\global\long\def\HH{\mathcal{H}}

\title{On syndetic Riesz sequences}

\author{Marcin Bownik}

\author{Itay Londner}

\address{Department of Mathematics, University of Oregon, Eugene, OR 97403--1222, USA}
\email{mbownik@uoregon.edu}

\address{School of Mathematical Sciences, Tel-Aviv University, Tel-Aviv 69978, Israel}
\curraddr{Department of Mathematics, University of British Columbia,
Vancouver, BC, V6T 1Z2,
Canada}

\email{itayl@math.ubc.ca}

\thanks{The first author was supported in part by the NSF grant DMS-1665056. The authors are grateful to Prof. Olevskii for discussions on the subject of this paper.}

\begin{abstract}
Applying the solution to the Kadison-Singer problem, we show
that every subset $\mathcal{S}$ of the torus of positive Lebesgue
measure admits a Riesz sequence of exponentials $\left\{ e^{i\lambda x}\right\} _{\lambda \in \Lambda}$
such that $\Lambda\subset\mathbb{Z}$ is a set with gaps between consecutive
elements bounded by ${\displaystyle \frac{C}{\left|\mathcal{S}\right|}}$.
In the case when $\mathcal{S}$ is an open set we demonstrate, using quasicrystals, 
how such $\Lambda$ can be deterministically constructed.	
\end{abstract}

\maketitle

\section{Introduction}

Let $\mathcal{H}$ be a separable Hilbert space and $I$
a countable set.

\begin{defn}
	A system of vectors $\left\{ \varphi_{i}\right\} _{i\in I}\subset\mathcal{H}$
	is called a \emph{frame} for $\mathcal{H}$ if there
	exist positive constants $A\leq B$ such that 
\begin{equation}
	A\norm {f}^{2}_{\mathcal{H}}\leq\sum_{i\in I}\left|\left\langle f,\varphi_{i}\right\rangle \right|^{2}\leq B\norm {f}^{2}_{\mathcal{H}}\label{eq:1}		
\end{equation}
for all vectors $f\in\mathcal{H}$.

We say that $\left\{ \varphi_{i}\right\} _{i\in I}$ is a
\emph{Bessel sequence} in $\mathcal{H}$ if only
the RHS inequality in $\left(\ref{eq:1}\right)$ is satisfied. If $A=B$, $\left\{ \varphi_{i}\right\} _{i\in I}$
is called a \emph{tight frame} and if $A=B=1$ it is called a \emph{Parseval frame.} \par

\end{defn}
\begin{defn}
	A system of vectors $\left\{ \varphi_{i}\right\} _{i\in I}\subset\mathcal{H}$
	is called a \emph{Riesz sequence} in $\mathcal{H}$
	if there exist positive constants $A\leq B$ such that 
	\begin{equation}
		A\sum_{i\in I}\left|a_{i}\right|^{2}\leq\norm{\sum_{i\in I}a_{i}\varphi_{i}}_{\mathcal{H}}^{2}\leq B\sum_{i\in I}\left|a_{i}\right|^{2} \label{eq:2}
	\end{equation}
	for every finite sequence of scalars $\left\{ a_{i}\right\} _{i\in I}$.
\end{defn}

In this paper we discuss Riesz sequences of exponentials,
i.e. for bounded sets $\SS$ of positive Lebesgue measure and a countable
set $\Lambda\subset\RR$ we consider the exponential system $E\left(\Lambda\right)=\left\{ e^{i\lambda x}\right\} _{\lambda\in\Lambda}$
in the space $L^{2}\left(\SS\right)$.

Throughout this paper we will denote by $\left|\SS\right|$ the Lebesgue measure of the set $\SS$.
Note that if $\Lambda$ is a separated set, i.e. $\inf_{\lambda\neq\mu}\left|\lambda-\mu\right|>0$,
then the RHS of $\left(\ref{eq:1}\right)$ and $\left(\ref{eq:2}\right)$ is satisfied for $E(\Lambda)$.
In particular, if $\SS\subset\TT=\RR/2\pi\ZZ$ is a set of positive
Lebesgue measure and $\Lambda\subseteq\ZZ$, then we may take $B=2\pi$.
Also, it follows from Parseval's identity that the normalized exponential system $\frac{1}{\sqrt{2\pi}} E\left(\ZZ\right)$ is a Parseval frame
in $L^{2}\left(\SS\right)$. 

The case where $I$ is an interval is classical. In order
to determine whether a system $E\left(\Lambda\right)$ is a Riesz
sequence in $L^{2}\left(I\right)$, one essentially needs to know
the upper density of $\Lambda$ 
\[
	D^{+}\left(\Lambda\right)=\lim_{r\rightarrow\infty}\sup_{a\in\RR}\frac{\#\left(\Lambda\cap\left(a,a+r\right)\right)}{r}.
\]
\par

\begin{rem*}
	Analogously one may define lower density, and in case these
	quantities coincide we say $\Lambda$ has \emph{uniform density}
	denoted $D\left(\Lambda\right)$.
\end{rem*}

Kahane \cite{kahane1957fonctions} proved the following result, see also {\cite{olevskii2016functions}}.

\begin{thm}[\cite{kahane1957fonctions}] \label{t1.3}
	If $D^{+}\left(\Lambda\right)<\frac{\left|I\right|}{2\pi}$,
	then $E\left(\Lambda\right)$ is a Riesz sequence in $L^{2}\left(I\right)$.
	On the other hand, if $D^{+}\left(\Lambda\right)>\frac{\left|I\right|}{2\pi}$,
	then $E\left(\Lambda\right)$ is not a Riesz sequence in $L^{2}\left(I\right)$.
\end{thm}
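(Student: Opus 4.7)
The plan is to recast the Riesz sequence property as an interpolation problem. Write $PW_I \subset L^2(\RR)$ for the Paley--Wiener space of $L^2$-functions whose Fourier transform is supported in $I$. By Plancherel's theorem, $E(\Lambda)$ is a Riesz sequence in $L^2(I)$ precisely when the restriction map $R_\Lambda : PW_I \to \ell^2(\Lambda)$, $R_\Lambda f = (f(\lambda))_{\lambda\in\Lambda}$, is bounded and surjective with a bounded right inverse; equivalently, $\Lambda$ is an interpolating sequence for $PW_I$. Both halves of the theorem then become questions about bandlimited functions.

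For the sufficiency, suppose $D^+(\Lambda) < |I|/(2\pi)$. A standard thinning argument reduces to the case when $\Lambda$ is separated, since the upper density bound precludes infinite clustering. Given $\lambda_0 \in \Lambda$, one constructs a Beurling-type interpolant $g_{\lambda_0} \in PW_I$ satisfying $g_{\lambda_0}(\lambda) = \delta_{\lambda,\lambda_0}$, with $L^2$-norm bounded uniformly in $\lambda_0$. The construction uses a suitably regularized Weierstrass product $G$ vanishing exactly on $\Lambda$: the strict density bound forces $G$ to be entire of exponential type at most $|I|/2$, whence $g_{\lambda_0}(z) := G(z)/((z-\lambda_0)G'(\lambda_0))$ lies in $PW_I$ by the Paley--Wiener theorem, with the requisite norm control obtained from Jensen's formula and estimates on horizontal lines in $\CC$.

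For the necessity, assume $E(\Lambda)$ is a Riesz sequence but $D^+(\Lambda) > |I|/(2\pi)$. By the definition of $D^+$, for arbitrarily large $r$ there exist intervals $(a,a+r)$ with $\#(\Lambda \cap (a, a+r)) > r|I|/(2\pi)$. The Riesz hypothesis forces the span of $\{e^{i\lambda x}\}_{\lambda \in \Lambda \cap (a,a+r)}$ in $L^2(I)$ to have this cardinality as its dimension. On the other hand, a Landau-type trace argument for the time--frequency concentration operator, namely spatial truncation to $(a, a+r)$ composed with Fourier projection onto $I$, shows that the number of its eigenvalues close to $1$ is only $r|I|/(2\pi) + o(r)$. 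Comparing dimensions in the limit $r \to \infty$ yields the contradiction.

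The principal technical obstacle lies in the sufficiency part: controlling the $L^2$-norms of the interpolants $g_{\lambda_0}$ requires delicate complex-analytic estimates on the Weierstrass product $G$ across the complex plane, with the strict inequality $D^+(\Lambda) < |I|/(2\pi)$ providing the necessary $\eps$-margin in the bound on exponential type. The necessity direction is comparatively robust and follows from the well-studied spectral asymptotics of the prolate spheroidal concentration operator.
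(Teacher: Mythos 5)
A point of comparison first: the paper does not prove Theorem \ref{t1.3} at all---it is quoted from Kahane's 1957 paper with a pointer to the book of Olevskii and Ulanovskii---so your proposal can only be judged on its own terms. The overall strategy you choose (recasting the Riesz-sequence property as interpolation in the Paley--Wiener space, a Beurling-type generating function for the sufficiency, and a concentration-operator eigenvalue count for the necessity) is the standard modern route. The necessity half is essentially fine: it is the special case $\SS=I$ of Landau's Theorem \ref{t1.4}, stated immediately afterwards in the paper, and your sketch is the right argument provided you make explicit that the comparison of dimensions uses the uniform lower Riesz bound (each unit vector in the span of the selected exponentials has a quantitatively large quadratic form against the concentration operator), not mere linear independence.

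The sufficiency half has genuine gaps. (i) The opening reduction is wrong: finite upper density does not preclude pairs of points of $\Lambda$ at distance tending to $0$ (take $\Lambda=\{n,\,n+1/n : n\ge 1\}$, which has $D^{+}(\Lambda)=2$ but is not separated, and for which the lower Riesz bound fails in every $L^{2}(I)$). No ``thinning'' is available, since the conclusion concerns all of $E(\Lambda)$; separation must be taken as a standing hypothesis, as it implicitly is in Kahane's formulation. (ii) The claim that the density bound ``forces $G$ to be entire of exponential type at most $|I|/2$'' does not hold for the raw Weierstrass product over an arbitrary separated $\Lambda$ with $D^{+}(\Lambda)=d$: the type of a canonical product is governed by the asymptotic distribution of its zeros, not by the upper density alone. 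The standard repair is to complete $\Lambda$ to a set of uniform density $d'$ with $d<d'<|I|/(2\pi)$ and form the product over the completed set; this is where the strict inequality is actually spent. (iii) Most importantly, uniform $L^{2}$ bounds on the individual interpolants $g_{\lambda_{0}}$ do not yield a bounded right inverse of $R_{\Lambda}$: one needs the synthesis estimate $\bigl\Vert \sum_{\lambda}c_{\lambda}g_{\lambda}\bigr\Vert_{2}\le C\Vert c\Vert_{\ell^{2}}$, which requires summable off-diagonal decay of $g_{\lambda_{0}}$ along $\RR$, typically obtained by multiplying $G(z)/(z-\lambda_{0})$ by mollifiers such as $\left(\sin(\eps(z-\lambda_{0}))/(\eps(z-\lambda_{0}))\right)^{2}$, again consuming part of the density margin. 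This step is the technical heart of the sufficiency direction and is absent from your outline.
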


For arbitrary sets of positive measure the situation is much
more complicated and only necessary condition exists due to Landau \cite{landau1967necessary}.

\begin{thm}[\cite{landau1967necessary}] \label{t1.4}
	Let $\SS \subset \RR$ be a measurable set. If $E\left(\Lambda\right)$
	is a Riesz sequence in $L^{2}\left(\SS\right)$, then $D^{+}\left(\Lambda\right)\leq\frac{\left|\SS\right|}{2\pi}$.
\end{thm}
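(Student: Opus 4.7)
The plan is to reformulate the Riesz sequence condition as an interpolation property for a Paley--Wiener space and then apply a concentration operator argument due to Landau. By Fourier duality, the statement that $E(\Lambda)$ is a Riesz sequence in $L^2(\SS)$ is equivalent to surjectivity of the sampling map $f \mapsto (\hat{f}(\lambda))_{\lambda \in \Lambda}$ from $L^2(\SS)$ onto $\ell^2(\Lambda)$, which in turn is equivalent to $\Lambda$ being an \emph{interpolating sequence} for the Paley--Wiener space $PW_\SS := \{g \in L^2(\RR) : \hat{g} \text{ is supported in } \SS\}$. Concretely, dualizing the lower Riesz bound produces a uniformly $L^2$-bounded biorthogonal family $\{\varphi_\lambda\}_{\lambda \in \Lambda} \subset PW_\SS$ with $\varphi_\lambda(\mu) = \delta_{\lambda\mu}$, from which any $\ell^2(\Lambda)$-data can be interpolated.

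Next, for a large interval $I$ of length $r$ I would introduce the concentration operator $C_I = B_\SS M_I B_\SS$ on $L^2(\RR)$, where $B_\SS$ is the orthogonal projection onto $PW_\SS$ and $M_I$ is multiplication by $\chi_I$. The operator $C_I$ is compact, self-adjoint, positive, has spectrum contained in $[0,1]$, and its trace equals $r|\SS|/(2\pi)$. By the Landau--Pollak--Slepian theory, the number of eigenvalues of $C_I$ exceeding $1/2$ is $r|\SS|/(2\pi) + O(\log r)$ as $r \to \infty$; this furnishes the upper bound.

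To obtain the matching lower bound on this spectral count, I would use the biorthogonal functions $\varphi_\lambda$ for $\lambda$ ranging over $\Lambda \cap I'$, where $I' \subset I$ is shrunk by a buffer of width $o(r)$. Via a truncation and Gram--Schmidt orthogonalization, this should yield an orthonormal family $\{\psi_\lambda\} \subset PW_\SS$ with $\langle C_I \psi_\lambda, \psi_\lambda\rangle \to 1$, forcing the spectral dimension of $C_I$ above $1/2$ to be at least $\#(\Lambda \cap I') - o(r)$. Combining the two bounds yields $\#(\Lambda \cap I)/r \leq |\SS|/(2\pi) + o(1)$, and letting $r \to \infty$ gives $D^+(\Lambda) \leq |\SS|/(2\pi)$. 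The main obstacle is precisely this localization step: the interpolating functions $\varphi_\lambda$ are only pointwise-biorthogonal on $\Lambda$ and are not a priori spatially concentrated near $\lambda$, so one must carefully exploit their Paley--Wiener decay together with truncation to simultaneously preserve approximate orthogonality and concentrate the energy on $I$; this is the heart of Landau's original argument.
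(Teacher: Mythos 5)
First, note that the paper does not prove this statement at all: Theorem \ref{t1.4} is quoted from Landau's 1967 paper, so there is no internal proof to compare against. Your sketch is, correctly, an outline of Landau's own concentration-operator argument, and its first half is sound: the lower Riesz bound for $E(\Lambda)$ dualizes (via the adjoint of the synthesis operator) to the statement that $\Lambda$ is a set of interpolation with uniform bound for $PW_{\SS}$, and the operator $C_I=B_{\SS}M_I B_{\SS}$ has $\mathrm{tr}(C_I)=r\left|\SS\right|/(2\pi)$. Two corrections are needed even here: (i) the $O(\log r)$ estimate on the plunge region is special to finite unions of intervals; for a general measurable $\SS$ of finite measure one only gets $\mathrm{tr}(C_I)-\mathrm{tr}(C_I^2)=o(r)$ (from Plancherel applied to $\widehat{\chi_{\SS}}$), hence $N_I(\gamma)=r\left|\SS\right|/(2\pi)+o(r)$ for each \emph{fixed} $\gamma\in(0,1)$ --- fortunately this suffices; (ii) you should record that the lower Riesz bound forces $\Lambda$ to be separated, which is needed for the counting and for the Bessel property of the reproducing kernels used below.

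The genuine gap is in the localization step, and it is not merely a technicality you can defer to "Landau's original argument": the claim that truncation plus Gram--Schmidt produces an orthonormal family with $\left\langle C_I\psi_{\lambda},\psi_{\lambda}\right\rangle \to 1$ is false. The minimal-norm interpolating functions $\varphi_{\lambda}$ are not asymptotically concentrated on $I$ (already for $\SS$ an interval the ones with $\lambda$ near $\partial I$ carry a fixed fraction of their energy outside $I$, and for general measurable $\SS$ there is no pointwise decay of the reproducing kernel to exploit). What Landau actually proves, and what you need, is weaker and of a different shape: writing $G$ for the span of $\{\varphi_{\lambda}\}_{\lambda\in\Lambda\cap I}$ and using that $\left\Vert f\right\Vert ^{2}\leq A^{-1}\sum_{\lambda}\left|f(\lambda)\right|^{2}$ for $f\in G$, together with $f(\lambda)=\left\langle f,k_{\lambda}\right\rangle$ and the uniform $L^2$-tail decay of translates of the reproducing kernel, one shows via the min--max principle that the $m$-th eigenvalue of $C_{I^{+}}$ (for a slightly enlarged interval $I^{+}$) is bounded below by a constant $\gamma>0$ depending only on the Riesz bounds of $E(\Lambda)$ --- not by anything tending to $1$. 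One then counts eigenvalues above this $\gamma$ rather than above $1/2$, which is why point (i) above (validity of the $+o(r)$ count for every fixed threshold) matters. Even this step requires care to control the off-$I^{+}$ contributions summed over the $m\sim r$ points without losing a factor of $m$; as written, your proposal asserts the conclusion of the hardest part of the proof in a form that is actually incorrect, so the argument does not close.
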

In light of Theorem \ref{t1.4} it is natural to ask:

\emph{Given a set $\SS$, does there exist a set $\Lambda$
of positive density such that the exponential system $E\left(\Lambda\right)$
is a Riesz sequence in $L^{2}\left(\SS\right)$?}

This question may be considered under various notions of
density. The first result on this subject was obtained by Bourgain
and Tzafriri \cite[Theorem 2.2]{MR890420} using their celebrated restricted invertibility theorem.

\begin{thm}[{\cite{MR890420}}]\label{t1.5}
	Given $\SS\subset\TT$ of positive measure, there exists
	a set $\Lambda\subset\ZZ$ with positive asymptotic density
	\[
		\text{dens}\left(\Lambda\right)=\lim_{r\rightarrow\infty}\frac{\#\left(\Lambda\cap\left(-r,r\right)\right)}{2r}>c\left|\SS\right|
	\]
	and such that $E\left(\Lambda\right)$ is a Riesz sequence in $L^{2}\left(\SS\right)$.
	Here, $c$ is an absolute constant, independent of $\SS$.
\end{thm}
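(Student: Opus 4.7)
The plan is to deduce Theorem \ref{t1.5} from the Bourgain--Tzafriri restricted invertibility theorem applied to a finite-dimensional truncation of the exponential system, and then to pass from windows of length $N$ to a set in $\ZZ$ via a compactness argument.

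First I would set up the relevant operator. View $P_{\SS} : f \mapsto \mathbf{1}_{\SS} f$ as an operator on $L^{2}(\TT)$, and take the orthonormal basis $\phi_{k}:=e^{ikx}/\sqrt{2\pi}$. Every column satisfies $\norm{P_{\SS}\phi_{k}}^{2}=|\SS|/(2\pi)$, while $\norm{P_{\SS}}\leq 1$. Fix $N$ and restrict $P_{\SS}$ to $V_{N}=\spa\{\phi_{k}:0\leq k<N\}$. This is a rank-$N$ operator whose columns have uniform squared norm $|\SS|/(2\pi)$ and whose operator norm is bounded independently of $N$.

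Next I would invoke the Bourgain--Tzafriri restricted invertibility theorem: there exist absolute constants $c,c'>0$ and a subset $\Lambda_{N}\subseteq\{0,1,\ldots,N-1\}$ with $|\Lambda_{N}|\geq c\,N|\SS|$ such that $P_{\SS}$ restricted to $\spa\{\phi_{k}:k\in\Lambda_{N}\}$ is bounded below by $c'\sqrt{|\SS|}$. Unwinding the definitions, this says that $E(\Lambda_{N})$ is a Riesz sequence in $L^{2}(\SS)$ whose constants depend only on $|\SS|$, not on $N$; the upper bound is automatic since $\frac{1}{\sqrt{2\pi}}E(\ZZ)$ is already a Parseval frame for $L^{2}(\SS)$.

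Finally I would assemble a set in $\ZZ$. Regarding each $\Lambda_{N}$ as a point in the compact product space $\{0,1\}^{\ZZ}$, a Tychonoff/diagonal extraction yields a subsequence converging pointwise to some $\Lambda\subseteq\ZZ$. For any finitely supported sequence $(a_{\lambda})$ on $\Lambda$, its support lies in $\Lambda_{N_{k}}$ for all sufficiently large $k$, so the uniform lower Riesz inequality transfers directly to $\Lambda$. The \emph{main obstacle} is to guarantee that $\Lambda$ has positive asymptotic density: a naive pointwise limit of sets dense only within a growing window $\{0,\ldots,N-1\}$ can easily become empty in $\ZZ$. The remedy is to exploit translation invariance of the construction, applying the restricted invertibility theorem either to many translated windows in parallel, or, equivalently, to the cyclic group $\ZZ/N\ZZ$ followed by periodic extension, so that the limit set inherits density at least $c|\SS|$ in every long interval in $\ZZ$ and therefore satisfies $\mathrm{dens}(\Lambda)>c|\SS|$ as required.
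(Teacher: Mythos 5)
The paper does not actually prove Theorem \ref{t1.5}; it quotes it from Bourgain--Tzafriri, noting only that it rests on their restricted invertibility theorem. Your finite-dimensional setup is the right starting point and is essentially correct: the columns $P_{\SS}\phi_k$ have constant squared norm $|\SS|/(2\pi)$, the synthesis operator has norm at most $1$, so restricted invertibility yields $\Lambda_N\subset\{0,\dots,N-1\}$ with $\#\Lambda_N\geq cN|\SS|$ and a lower Riesz bound proportional to $|\SS|$, uniformly in $N$; the compactness extraction also correctly transfers the lower bound to any pointwise limit. The problem is that the entire difficulty of the theorem sits in your final paragraph, and neither remedy you sketch actually works.

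Selecting $\Lambda^{(j)}\subset[jN,(j+1)N)$ in each translated window and taking the union fails because Riesz sequences are not stable under unions: restricted invertibility gives no control on the Gram entries $\langle e^{i\lambda x},e^{i\mu x}\rangle_{L^{2}(\SS)}=\widehat{\mathbf{1}_{\SS}}(\mu-\lambda)$ for $\lambda,\mu$ chosen in different windows, and for a general measurable $\SS$ these cross terms can destroy the lower bound; this lack of control across blocks is exactly why the syndetic/Feichtinger problem studied in this paper is nontrivial. The periodic extension $\Lambda=\sigma+N\ZZ$ fails as well: a necessary condition for $E(\sigma+N\ZZ)$ to be a Riesz sequence in $L^{2}(\SS)$ is that for a.e.\ $x$ at least $\#\sigma$ of the points $x+2\pi l/N$, $l=0,\dots,N-1$, lie in $\SS$ (a fiberwise rank condition), and if $\SS$ is a fat Cantor set then for \emph{every} $N$ the set of $x$ with no point of $x+(2\pi/N)\ZZ$ in $\SS$ is open, dense and nonempty, hence of positive measure, so no nonempty $\sigma$ works for any $N$ even though $|\SS|$ can be arbitrarily close to $2\pi$. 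The actual Bourgain--Tzafriri argument is a genuinely infinite-dimensional, density version of restricted invertibility: the set $\Lambda$ is produced by a random selector of density $\delta\sim|\SS|$ acting on all of $\ZZ$ simultaneously, with the off-diagonal part of the full Toeplitz Gram matrix controlled globally rather than window by window. That global estimate is the missing ingredient; without it the passage from the truncated statement to a set of positive asymptotic density does not go through.
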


Theorem \ref{t1.5} implies that every set $\SS$ admits a Riesz
sequence $\Lambda$ with positive upper density, proportional to the
measure of $\SS$, and so we can answer the question above positively.
Another related property is a relatively dense set, also referred to as a syndetic set.

\begin{defn}
	A subset $\Lambda=\left\{ \ldots<\lambda_{0}<\lambda_{1}<\lambda_{2}<\ldots\right\} \subset\mathbb{Z}$
	is called \emph{syndetic} if the consecutive gaps
	in $\Lambda$ are bounded, i.e., if there is a positive constant $d$
	such that 
	\begin{equation}
		\lambda_{n+1}-\lambda_{n}\leq d\quad\forall n\in\ZZ.\label{eq:3}
	\end{equation}
	We denote the smallest possible constant satisfying $\left(\ref{eq:3}\right)$
	by $\gamma\left(\Lambda\right)$.
\end{defn}

The existence of syndetic Riesz sequences has been proved by Lawton \cite[Corollary 2.1]{lawton2010minimal}, and by a different 
approach by Paulsen \cite[Theorem 1.1]{paulsen2011syndetic} to be equivalent to the Feichtinger conjecture.

\begin{thm}[\cite{lawton2010minimal}] \label{t1.7}
	Given a set $\SS\subset\TT$ of positive measure, the following are equivalent: 
	\begin{enumerate}
	\item [(i)]{There exists $r\in\NN$ and  $\ZZ=\bigcup_{j=1}^{r}\Lambda_{j}$
		such that $E\left(\Lambda_{j}\right)$ is a Riesz sequence in $L^{2}\left(\SS\right)$
		for all $j\in \left[r\right] =\{1,\ldots,r\}$.}
	\item [(ii)]{There exists $d\in\NN$ and a syndetic set $\Lambda\subseteq\ZZ$
		with $\gamma\left(\Lambda\right)=d$ such that $E\left(\Lambda\right)$
		is a Riesz sequence in $L^{2}\left(\SS\right)$.} 
	\end{enumerate}
\end{thm}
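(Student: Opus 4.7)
\textbf{The easy direction} $(ii)\Rightarrow(i)$ reduces to two observations. First, if $\Lambda$ is syndetic with $\gamma(\Lambda)=d$, every integer lies within distance $d-1$ of $\Lambda$, so the $d$ translates $\Lambda-j$ for $j=0,1,\ldots,d-1$ cover $\ZZ$, and we refine this cover to a disjoint partition $\ZZ=\bigsqcup_{j=0}^{d-1}\Lambda_j'$ with $\Lambda_j'\subseteq\Lambda-j$. Second, modulation by $e^{-ijx}$ is unitary on $L^2(\SS)$, so $E(\Lambda-j)$ is a Riesz sequence with the same bounds as $E(\Lambda)$, and the Riesz property is inherited by subsets; this yields (i) with $r=d$.

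\textbf{For the reverse direction} $(i)\Rightarrow(ii)$ my plan is to invoke topological dynamics on the full shift $X=\left(\{0,1\}^{\ZZ}\right)^{r}$, equipped with the product (hence compact) topology and the diagonal left shift $T$. I encode the given partition as $x_{0}=(\mathbf{1}_{\Lambda_{1}},\ldots,\mathbf{1}_{\Lambda_{r}})\in X$ and form the orbit closure $Y=\overline{\{T^{n}x_{0}:n\in\ZZ\}}$. The key is that two properties pass to every $y\in Y$, with coordinate sets denoted $\Lambda_{j}^{(y)}:=\{n:y_{j}(n)=1\}$: (a) the union $\bigcup_{j=1}^{r}\Lambda_{j}^{(y)}$ is still all of $\ZZ$, since this is a closed condition in the product topology and holds on the orbit; (b) each $E(\Lambda_{j}^{(y)})$ is a Riesz sequence in $L^{2}(\SS)$ with the original bounds, because the bounds are invariant under integer shifts (via modulation) and a bilinear estimate for any finitely supported coefficient sequence only sees a finite window of $\Lambda_{j}^{(y)}$, which stabilizes along convergence in $X$.

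\textbf{To conclude}, I extract a $T$-minimal closed subset $M\subseteq Y$ via Zorn's lemma and pick any $y\in M$. By (a) some coordinate $\Lambda_{j_{0}}^{(y)}$ is nonempty; after shifting $y$ within $M$ we may assume $0\in\Lambda_{j_{0}}^{(y)}$, so $y$ belongs to the nonempty open cylinder $U=\{z:z_{j_{0}}(0)=1\}$. The standard fact that in a minimal $\ZZ$-system the return times to any nonempty open set are syndetic gives that $\{n:T^{n}y\in U\}=\Lambda_{j_{0}}^{(y)}$ is syndetic in $\ZZ$, and by (b) the system $E(\Lambda_{j_{0}}^{(y)})$ is a Riesz sequence, which is exactly (ii).

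\textbf{The main obstacle} is the verification of (b), namely the propagation of the Riesz property along limits in the product topology. Shift invariance handles points on the orbit itself, while the limit step relies on the observation that the lower and upper Riesz bounds are tested on finitely supported coefficient sequences and hence only depend on the restriction of $\Lambda_{j}^{(y)}$ to an arbitrary finite window, which stabilizes under pointwise convergence in $X$. A direct combinatorial attack, say pigeonholing one element of $\bigcup\Lambda_{j}$ per interval of length $r$, cannot succeed because it gives no control of the lower Riesz constant, which is precisely what the dynamical route secures.
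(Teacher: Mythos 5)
Your argument is correct, but note that the paper itself gives no proof of Theorem \ref{t1.7}: it is quoted from Lawton \cite{lawton2010minimal} and Paulsen \cite{paulsen2011syndetic}, and the only in-text commentary is the remark that (ii) $\implies$ (i) follows ``trivially\ldots by considering translates of $\Lambda$'' with $r\le d$ --- exactly your easy direction. For the substantial direction (i) $\implies$ (ii), your route through the orbit closure in $(\{0,1\}^{\ZZ})^r$, a minimal subsystem, and syndeticity of return times to a clopen cylinder is essentially Lawton's ``minimal sequences'' argument (Paulsen's alternative proof runs instead through the Stone--\v{C}ech compactification and dynamical syndeticity there). All the key verifications check out: covering $\ZZ$ is a closed condition in the product topology, so it passes to the orbit closure; the lower Riesz bound is a family of inequalities each involving only $\Lambda_j^{(y)}\cap F$ for a finite window $F$, which stabilizes along a convergent subsequence of shifts, and the shifts themselves preserve the bound by modulation; and minimality plus compactness gives bounded gaps for the return-time set $\{n: T^n y\in U\}=\Lambda_{j_0}^{(y)}$. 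Two cosmetic points: the shift to arrange $0\in\Lambda_{j_0}^{(y)}$ is unnecessary, since property (a) at $n=0$ already supplies such a $j_0$ for the chosen $y\in M$; and you should say explicitly that the nonempty minimal set exists because $Y$ is a nonempty compact invariant set (Zorn on the family of nonempty closed invariant subsets). Your closing observation is also consistent with the paper's own remark after Theorem \ref{t1.7}: no quantitative bound on $\gamma(\Lambda)$ in terms of $r$ comes out of this argument, which is precisely the gap that Theorem \ref{A} and Corollary \ref{c2.1} are designed to close.
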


\begin{rem}
	While we can trivially deduce $r\leq d$ in the implication (ii) $\implies$ (i) by considering translates
	of $\Lambda$, it should be emphasized that the proof of Theorem \ref{t1.7}
	does not imply any connection in the other direction. Explicitly,
	knowing the number of partitions $r$ in $\left(\text{i}\right)$
	does not give any upper bound on $\gamma\left(\Lambda\right)$ in
	$\left(\text{ii}\right)$. 
\end{rem}

The first statement in Theorem \ref{t1.7} is known as the Feichtinger
conjecture for exponentials. The Feichtinger conjecture in its general
form states that every bounded frame can be decomposed into finitely
many Riesz sequences. It has been proved by Casazza et al. in \cite{casazza2005frames} as 
well as by Casazza and Tremain \cite{casazza2006kadison},
to be equivalent to the Kadison-Singer problem. The latter has been
solved recently by Marcus, Spielman and Srivastava. Their main result
is the following theorem \cite[Theorem 1.4]{marcus2013interlacing}.

\begin{thm}[\cite{marcus2013interlacing}] \label{MSS1}
	If $\varepsilon>0$ and $v_{1},\ldots,v_{m}$
	are independent random vectors in $\mathbb{C}^{d}$ with finite support
	such that ${\displaystyle \sum_{i=1}^{m}\mathbb{E}\left[v_{i}v_{i}^{*}\right]= \mathbf{I}_{d}}$
	and $\mathbb{E}\left[\left\Vert v_{i}\right\Vert ^{2}\right]\leq\varepsilon$
	for all $i$, then 
	\[
		\mathbb{P}\left(\left\Vert \sum_{i=1}^{m}v_{i}v_{i}^{*}\right\Vert \leq\left(1+\sqrt{\varepsilon}\right)^{2}\right)>0.
	\] 
\end{thm}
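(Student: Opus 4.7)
The plan is to follow the strategy of Marcus--Spielman--Srivastava: encode the random PSD rank-one matrix $v_iv_i^*$ through an associated univariate polynomial, show that the expectation of these polynomials behaves well enough to locate a realization with small operator norm, and then bound the largest root of the expected polynomial by a multivariate ``barrier'' argument. After a standard reduction, I may assume each $v_i$ takes one of finitely many values $w_{i,1},\ldots,w_{i,N_i}$ with probabilities $p_{i,j}$. The central object is the \emph{mixed characteristic polynomial} associated with $A_i=\mathbb{E}[v_iv_i^*]$,
\[
\mu[A_1,\ldots,A_m](x) = \left(\prod_{i=1}^m (1-\partial_{z_i})\right)\det\!\left(xI_d + \sum_{i=1}^m z_i A_i\right)\bigg|_{z=0}.
\]
A short computation that exchanges expectation with the determinant and uses the rank-one Sylvester identity $\det(xI-\sum v_iv_i^*) = (\prod_i(1-\partial_{z_i}))\det(xI+\sum z_iv_iv_i^*)|_{z=0}$ shows that
$\mathbb{E}\bigl[\chi(\sum_i v_iv_i^*)\bigr]=\mu[A_1,\ldots,A_m](x)$.

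The next stage is to construct an interlacing family. For each partial assignment $(j_1,\ldots,j_k)$ of the first $k$ vectors to specific values $w_{i,j_i}$, form the polynomial
\[
q_{j_1,\ldots,j_k}(x) = \mathbb{E}\!\left[\chi\!\left(\sum_{i\leq k}w_{i,j_i}w_{i,j_i}^* + \sum_{i>k} v_iv_i^*\right)\right],
\]
so that the leaves of this tree are the individual characteristic polynomials $\chi(\sum w_{i,j_i}w_{i,j_i}^*)$ and the root is $\mu[A_1,\ldots,A_m]$. I would verify that (i) each of these conditional expectations is a real-rooted univariate polynomial and (ii) any two siblings share a common interlacer. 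Both properties reduce, via the multilinear identity above, to the statement that polynomials of the form $(1-\partial_z)p(x,z)|_{z=0}$ preserve real-rootedness out of a bivariate real stable polynomial, which is the standard closure property of real stability. The interlacing-families lemma then guarantees a specific choice of indices for which the largest eigenvalue of $\sum w_{i,j_i}w_{i,j_i}^*$ is at most the largest root of $\mu[A_1,\ldots,A_m]$.

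What remains, and what I expect to be the main obstacle, is showing that this largest root is bounded by $(1+\sqrt{\varepsilon})^2$ whenever $\sum A_i = I_d$ and $\mathrm{tr}(A_i)\leq\varepsilon$. The natural tool here is the multivariate barrier method. Starting from the real-stable polynomial $P(x,z_1,\ldots,z_m) = \det(xI_d+\sum z_i A_i)$, which has its ``above-roots'' region containing $\{x>d\}\times\mathbb{R}_{\geq 0}^m$, I would track how the operator $(1-\partial_{z_i})$ changes the positions one may take as an upper barrier in the $x$- and $z_i$-directions, measured through the Cauchy transforms $\Phi_x^P$ and $\Phi_{z_i}^P$. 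The key lemma says that as long as $\Phi_{z_i}^P(b)<1$, one may apply $(1-\partial_{z_i})$ and shift the $z_i$-coordinate to $0$ while increasing the $x$-coordinate by a controlled amount depending on $\Phi_{z_i}^P(b)$. Optimizing the shift over the $m$ coordinates and using $\sum\mathrm{tr}(A_i)=d$ and $\mathrm{tr}(A_i)\leq\varepsilon$ yields the bound $1+\varepsilon+2\sqrt{\varepsilon}=(1+\sqrt{\varepsilon})^2$ on the barrier in $x$ after all $m$ operators have been applied, hence on the largest root of $\mu[A_1,\ldots,A_m]$. Combining with the interlacing-family step produces the realization whose norm is at most $(1+\sqrt{\varepsilon})^2$, giving the required positive probability.
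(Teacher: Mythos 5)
This theorem is quoted in the paper from Marcus--Spielman--Srivastava without proof, so there is no internal argument to compare against; your sketch is an accurate roadmap of the original MSS proof (mixed characteristic polynomials, interlacing families, multivariate barrier argument), which is essentially the only known route to this statement. Two points where the outline elides real work: for the interlacing-family step, the common interlacer for siblings is not obtained directly from stability closure applied to each sibling, but from the criterion that a family has a common interlacer if and only if \emph{every convex combination} of its members is real-rooted --- and one must observe that a convex combination of the conditional expectations $q_{j_1,\ldots,j_{k-1},j}$ is itself a mixed characteristic polynomial (for a reweighted distribution), hence real-rooted; and the barrier computation, which you correctly identify as the main obstacle, is the bulk of the MSS paper (the monotonicity and convexity lemmas for $\Phi_{z_i}^P$ under $1-\partial_{z_i}$, and the optimization $\delta=\sqrt{\varepsilon}+\varepsilon$, $t=1+\sqrt{\varepsilon}$ giving the final bound $t+\delta\cdot(\text{nothing extra})=(1+\sqrt{\varepsilon})^2$). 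As a proof outline it is correct and complete in its list of ingredients; as a proof it defers precisely the quantitative lemmas that make the theorem true.
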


Here and below $\mathbf I_{d}$ is the $d\times d$ identity matrix.  
Later, using similar techniques, Bownik, Casazza, Marcus and
Speegle \cite[Theorem 1.2]{bownik2016improved} gave an improved bound in the case when random vectors $v_{1},\ldots,v_{m}$
each have support of size $2$.

\begin{thm}[\cite{bownik2016improved}] \label{MSS2}
	If $\varepsilon\in\left(0,\nicefrac{1}{2}\right)$
	and $v_{1},\ldots,v_{m}$ are independent random vectors in $\mathbb{C}^{d}$
	with support of size 2 such that ${\displaystyle \sum_{i=1}^{m}\mathbb{E}\left[v_{i}v_{i}^{*}\right]= \mathbf I_{d}}$
	and $\mathbb{E}\left[\left\Vert v_{i}\right\Vert ^{2}\right]\leq\varepsilon$
	for all $i$, then 
	\[
		\mathbb{P}\left(\left\Vert \sum_{i=1}^{m}v_{i}v_{i}^{*}\right\Vert \leq1+2\sqrt{\varepsilon\left(1-\varepsilon\right)}\right)>0.
	\]
\end{thm}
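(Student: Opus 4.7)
The plan is to apply the method of interlacing polynomials of Marcus--Spielman--Srivastava, with a refinement designed to exploit the fact that each $v_i$'s distribution has only two atoms. Let $v_i$ take the deterministic value $w_{i,1}\in\CC^d$ with probability $p_i$ and $w_{i,2}\in\CC^d$ with probability $1-p_i$, so that $p_iw_{i,1}w_{i,1}^*+(1-p_i)w_{i,2}w_{i,2}^*=\mathbb{E}[v_iv_i^*]$ and $p_i\|w_{i,1}\|^2+(1-p_i)\|w_{i,2}\|^2\le\eps$. For $\sigma\in\{1,2\}^m$ set
\[
q_\sigma(x)=\det\!\Bigl(x\mathbf{I}_d-\sum_{i=1}^m w_{i,\sigma_i}w_{i,\sigma_i}^*\Bigr),\qquad \pi_\sigma=\prod_i p_i^{\mathbbm{1}[\sigma_i=1]}(1-p_i)^{\mathbbm{1}[\sigma_i=2]},
\]
so that the expected characteristic polynomial equals $P(x):=\sum_\sigma \pi_\sigma q_\sigma(x)=\mathbb{E}\bigl[\det(x\mathbf{I}_d-\sum_i v_iv_i^*)\bigr]$.

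First, I would show that $\{q_\sigma\}_{\sigma\in\{1,2\}^m}$ forms an interlacing family indexed by the binary tree $\{1,2\}^{\le m}$. At any internal node the conditional expectation is a convex combination of two polynomials of the form $\det(xI-A-ww^*)$ with a common base matrix $A$; Cauchy's interlacing theorem for rank-one perturbations implies each is real-rooted and both are interlaced by $\det(xI-A)$, hence any convex combination of them is real-rooted. The MSS interlacing lemma then produces a realization $\sigma$ with $\lambda_{\max}(\sum_i w_{i,\sigma_i}w_{i,\sigma_i}^*)$ at most the largest root of $P$, reducing the problem to bounding that root.

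Second, I would rewrite $P$ in a form amenable to a barrier argument. Using $\det(xI-A-ww^*)=(1-\partial_z)\det(xI-A+z\,ww^*)|_{z=0}$ for each atom, $P$ becomes a product of $m$ binary differential operators $p_i(1-\partial_{z_{i,1}})+(1-p_i)(1-\partial_{z_{i,2}})$ applied at $z=0$ to a single real-stable multivariate determinant. By the Borcea--Br\"anden theory each such operator preserves real stability, placing $P$ inside the framework of the MSS mixed characteristic polynomial. The additional structure compared to \cite{marcus2013interlacing} is that each $i$-th factor is a convex combination of only \emph{two} partial-derivative directions rather than an integral against a generic measure of prescribed second moment.

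Third, I would run a barrier-function argument for the largest root of $P$. The $i$-th factor shifts an upper barrier by an amount whose worst case, over all two-atom distributions supported on $\{w_{i,1},w_{i,2}\}$ satisfying $p_i\|w_{i,1}\|^2+(1-p_i)\|w_{i,2}\|^2\le\eps$, is strictly smaller than under the general second-moment constraint. A Lagrange-multiplier calculation on this two-point moment problem identifies the extremum at $p_i\|w_{i,1}\|^2=(1-p_i)\|w_{i,2}\|^2$ and produces a per-factor barrier shift bounded by $2\sqrt{\eps(1-\eps)}$ in place of the $2\sqrt{\eps}+\eps$ of the unconstrained case. Choosing the initial barrier as the fixed point of the resulting one-step map then yields $1+2\sqrt{\eps(1-\eps)}$ as the desired upper bound on the largest root. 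The main obstacle is exactly this third step: showing that the binary structure genuinely tightens the per-factor shift and then propagating the local improvement through $m$ iterations without loss, which requires careful calibration of the initial barrier location and a uniform monotonicity estimate for the barrier potential along the iteration.
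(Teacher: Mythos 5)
First, a point of order: the paper does not prove this statement at all --- Theorem \ref{MSS2} is imported verbatim from \cite{bownik2016improved} and used as a black box, so there is no in-paper proof to compare against. Your outline is directionally consistent with how the result is actually established in that reference (interlacing families, real stability, a multivariate barrier argument), but as written it has two genuine gaps, and the second one is exactly where the theorem's content lies.

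The first gap is in your interlacing step. Your justification --- that at an internal node the two children are polynomials of the form $\det(x\mathbf I_d-A-ww^{*})$ with a common base matrix $A$, so Cauchy interlacing applies --- is only valid at the bottom level of the tree, where all but one coordinate of $\sigma$ has been fixed. At a node of depth $k<m-1$ each child is an average over the remaining $m-k-1$ coordinates, not a single rank-one perturbation, and to get a common interlacing there you must show that \emph{every} convex combination of the two children is real-rooted; that requires the real-stability machinery you only introduce in your second step, applied to every conditional expectation, not just to the root polynomial $P$. The second and more serious gap is the barrier step. In the Marcus--Spielman--Srivastava argument the bound $(1+\sqrt{\eps})^{2}$ does not arise as ``initial barrier plus a per-factor shift of $2\sqrt{\eps}+\eps$''; it comes from optimizing the initial barrier location against the \emph{cumulative} shift, which is controlled by $\sum_i\operatorname{tr}\mathbb{E}[v_iv_i^{*}]$ and a potential-function bookkeeping inequality. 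So the claim that the two-atom structure ``tightens the per-factor shift'' from $2\sqrt{\eps}+\eps$ to $2\sqrt{\eps(1-\eps)}$ is not a refinement of an existing per-factor estimate --- no such estimate exists in that form --- and your Lagrange-multiplier heuristic on the two-point moment problem is not connected to the barrier potential by any stated inequality. Moreover, once you pass to the mixed characteristic polynomial the distribution of $v_i$ is only seen through $\mathbb{E}[v_iv_i^{*}]$, so you must explain precisely which feature of the binary operator $1-p_i\partial_{z_{i,1}}-(1-p_i)\partial_{z_{i,2}}$ (as opposed to the general $1-\partial_z$) enters the potential estimates and how the improvement survives composition over all $m$ factors without loss. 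You flag this yourself as ``the main obstacle,'' and rightly so: it is the entire theorem. As it stands the proposal is a plausible plan, not a proof.
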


Theorems \ref{MSS1} and \ref{MSS2}  give the following quantitative bounds on the number of partitions $r$, see \cite[Theorem 1.4]{bownik2016improved}.

\begin{thm}[\cite{bownik2016improved}] \label{bcms}
	Let $\eps>0$ and suppose that $\left\{ u_{i}\right\} _{i\in I}$
	is a Bessel sequence in $\mathcal{H}$ with bound $1$ that consists
	of vectors satisfying $\norm{u_{i}}^2\geq\eps$. Then there exists a universal
	constant $C>0$, such that $I$ can be partitioned into $r\leq\frac{C}{\eps}$
	subsets $I_{1},\ldots,I_{r}$ such that every subfamily $\left\{ u_{i}\right\} _{i\in I_{j}}$,
$j=1,\ldots,r$, is a Riesz sequence in $\mathcal{H}.$ Moreover, if $\eps>3/4$, then $r=2$ works.
\end{thm}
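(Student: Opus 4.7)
The plan is to apply the Marcus--Spielman--Srivastava results through a Naimark-type orthonormal dilation. Let $G$ denote the Gram operator of $\{u_i\}$ on $\ell^2(I)$, $G_{ij} = \langle u_j, u_i\rangle$; the Bessel bound gives $0 \le G \le I$. I would set $w_i := (I-G)^{1/2}\delta_i \in \ell^2(I)$ and $\tilde u_i := u_i \oplus w_i \in \HH \oplus \ell^2(I)$. A direct inner-product computation shows $\{\tilde u_i\}_{i\in I}$ is orthonormal with $\|w_i\|^2 = 1-\|u_i\|^2 \le 1-\eps$, so for any $J \subseteq I$ and any finite scalars $\{a_i\}_{i\in J}$,
\[
\Bigl\|\sum_{i\in J} a_i u_i\Bigr\|^2_{\HH} = \sum_{i\in J}|a_i|^2 - \Bigl\|\sum_{i\in J} a_i w_i\Bigr\|^2.
\]
Consequently $\{u_i\}_{i\in J}$ is a Riesz sequence in $\HH$ if and only if the positive operator $W_J := \sum_{i\in J} w_iw_i^*$ on $\ell^2(I)$ has norm strictly less than $1$; the theorem reduces to partitioning $I$ into $r \le C/\eps$ blocks $I_j$ with $\|W_{I_j}\| < 1$ for each $j$, given $\|w_i\|^2 \le 1-\eps$ and $\sum_i w_iw_i^* \le I$.

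For the ``moreover'' statement I would invoke Theorem \ref{MSS2} directly. Let $\xi_i$ be i.i.d.\ uniform on $\{1,2\}$, set $V_i := \sqrt 2\, w_i \otimes e_{\xi_i} \in \ell^2(I)\otimes\CC^2$ (a support-$2$ random vector), and pad with deterministic $Z_{l,k} := y_l \otimes e_k$, where $\{y_l\}$ decomposes $I-W$ into vectors of arbitrarily small norm. Then $\sum\mathbb{E}[V_iV_i^*] + \sum Z_{l,k}Z_{l,k}^* = W\otimes I_2 + (I-W)\otimes I_2 = I$, while $\mathbb{E}[\|V_i\|^2] = 2\|w_i\|^2 \le 2(1-\eps) < 1/2$ exactly when $\eps > 3/4$. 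Theorem \ref{MSS2} applied with $\eps_{\text{MSS}} = 2(1-\eps)$ then produces a realization satisfying
\[
2\|W_k\| \le \|2W_k + (I-W)\| \le 1 + 2\sqrt{2(1-\eps)(2\eps-1)}, \quad k=1,2,
\]
the lower estimate coming from testing against a top eigenvector of $W_k$ and using $I-W \ge 0$. The elementary inequality $2(1-\eps)(2\eps-1) \le 1/4$, with equality only at $\eps = 3/4$, shows the right-hand side is strictly less than $2$ for $\eps > 3/4$, so $\|W_k\| < 1$ for $k=1,2$.

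The main obstacle is the quantitative bound $r \le C/\eps$ for general $\eps \in (0,1]$. A naive random $r$-coloring with $V_i := \sqrt r\, w_i \otimes e_{\xi_i}$ and $\xi_i$ uniform on $[r]$ produces random vectors of support $r$, so one can only invoke the weaker Theorem \ref{MSS1}; the resulting bound $\|W_{I_k}\| \le (1+\sqrt{r(1-\eps)})^2/r$ is strictly less than $1$ only for $r = \Omega(1/\eps^2)$. Obtaining the sharp linear bound thus requires preserving the support-$2$ structure (so that the sharper factor $\sqrt{\eps(1-\eps)}$ in Theorem \ref{MSS2} remains available) while distributing the indices across $r$ bins: one natural device is to assign each index $i$ a deterministic pair of candidate bins drawn from a balanced block design on $[r]$ and let $\xi_i$ pick uniformly between them. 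A block-diagonal analysis analogous to the one above then extracts $\|W_{I_k}\| < 1$ from the operator-norm bound supplied by Theorem \ref{MSS2}; this support-$2$ improvement is precisely what converts the $O(1/\eps^2)$ dependence of the naive argument into the claimed $O(1/\eps)$.
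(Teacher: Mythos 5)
Your reduction via the Gram-operator dilation is sound and is equivalent to the route taken through Naimark's complement (Lemma \ref{l3.3}): deciding whether $\left\{ u_{i}\right\} _{i\in J}$ is a Riesz sequence is the same as keeping $\norm{W_J}$ bounded away from $1$. Your treatment of the case $\eps>3/4$ is also essentially the argument behind Theorem \ref{t3.5} and Corollary \ref{c3.6} (take $\delta_0=1-\eps$; your bound $\tfrac12+\sqrt{2(1-\eps)(2\eps-1)}$ is exactly $1-\eps_0$ there). Two minor blemishes: the padding vectors must themselves be randomized in pairs so that the support-$2$ hypothesis of Theorem \ref{MSS2} is met (your deterministic $Z_{l,k}$ have support $1$), and for infinite $I$ you need the finite-to-infinite compactness step (Lemma \ref{l3.8}, or \cite[Proposition 2.1]{casazza2005frames} for partitions), which you never invoke.

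The genuine gap is the main bound $r\le C/\eps$. Your proposed device --- keep the complement vectors $w_i$, assign each index a pair of candidate bins from a block design, and apply Theorem \ref{MSS2} --- cannot work, for two reasons. First, Theorem \ref{MSS2} requires $\mathbb{E}\left[\norm{V_i}^2\right]\le\varepsilon_{\mathrm{MSS}}<1/2$, whereas your vectors satisfy only $\mathbb{E}\left[\norm{V_i}^2\right]=2\norm{w_i}^2\le 2(1-\eps)$, which is close to $2$ when $\eps$ is small; the support-$2$ structure buys nothing when the norms are large. Second, the isotropy condition $\sum_i\mathbb{E}\left[V_iV_i^*\right]=\mathbf I$ is unattainable for $r>2$: each index contributes mass to only two of the $r$ bins, so the bin marginals sum to $2W$ rather than $r\,\mathbf I$, and rescaling to repair this inflates $\mathbb{E}\left[\norm{V_i}^2\right]$ by a factor of $r/2$, landing you back outside the range of Theorem \ref{MSS2}. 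The actual proof in \cite{bownik2016improved} (mirrored in Theorem \ref{t3.7} of this paper) is a two-step composition that your outline is missing entirely: first apply Theorem \ref{MSS1} to the \emph{original} vectors $u_i$, normalized so that $\norm{u_i}^2=\eps$, distributed over $r=\lceil C/\eps\rceil$ bins with weight $\sqrt{r}\,u_i$, so that $\mathbb{E}\left[\norm{v_i}^2\right]=r\eps\approx C$ and each resulting piece has Bessel bound $(1/\sqrt{r}+\sqrt{\eps})^2\le\eps/(1-\delta_0)$; then renormalize each piece to Bessel bound $1$, note that the vectors now satisfy $\norm{u_i}^2\ge 1-\delta_0>3/4$ relative to that bound, and apply your $r=2$ complement argument to split each piece in two. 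This yields $2r=O(1/\eps)$ pieces; the linear dependence on $1/\eps$ comes from the fact that the first step only needs to drive the Bessel bound down to a constant multiple of $\eps$ (so $1/\sqrt{r}\lesssim\sqrt{\eps}$ suffices), not from any support-$2$ refinement of the random partition.
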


Applying Theorem \ref{bcms} to exponential systems they obtained the following corollary \cite[Corollary 6.16]{bownik2016improved}.

\begin{thm}[\cite{bownik2016improved}]
	There exists a universal constant $C>0$ such that for
	any subset $\SS\subset\TT$ with positive measure, the exponential
	system $E\left(\ZZ\right)$ can be decomposed as a union of ${\displaystyle r\leq\nicefrac{C}{\left|\SS\right|}}$
	Riesz sequences $E\left(\Lambda_{j}\right)$ in $L^{2}\left(\SS\right)$
	for $j=1,\ldots,r$. Moreover, if $|\SS| >3/4$, then $r=2$ works.

\end{thm}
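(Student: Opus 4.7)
The plan is to apply Theorem~\ref{bcms} to a suitably normalized version of $E(\ZZ)$. First I will set $u_n:=\frac{1}{\sqrt{2\pi}}e^{inx}$ for $n\in\ZZ$ and argue that $\{u_n\}_{n\in\ZZ}$ is a Parseval frame for $L^2(\SS)$. This follows immediately from Parseval's identity on $\TT$: for $f\in L^2(\SS)$, extending $f$ by zero to $L^2(\TT)$ preserves both the $L^2$-norm and the pairings with the $u_n$, so
\[
\sum_{n\in\ZZ}|\langle f,u_n\rangle_{L^2(\SS)}|^2=\|f\|_{L^2(\TT)}^2=\|f\|_{L^2(\SS)}^2.
\]
In particular $\{u_n\}$ is Bessel with bound $1$, matching the hypothesis of Theorem~\ref{bcms}.

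Next I will observe that every vector has the same squared norm $\|u_n\|_{L^2(\SS)}^2=|\SS|/(2\pi)$. Setting $\varepsilon:=|\SS|/(2\pi)$, Theorem~\ref{bcms} provides a partition $\ZZ=\Lambda_1\sqcup\dots\sqcup\Lambda_r$ with $r\leq C/\varepsilon=2\pi C/|\SS|$ such that each $\{u_n\}_{n\in\Lambda_j}$ is a Riesz sequence in $L^2(\SS)$. Multiplication by the nonzero scalar $\sqrt{2\pi}$ preserves the Riesz property (it merely rescales the constants $A,B$ in $(\ref{eq:2})$), so each $E(\Lambda_j)$ is a Riesz sequence as well; absorbing the factor $2\pi$ into a new universal constant yields the claimed bound $r\leq C/|\SS|$.

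For the \textbf{moreover} assertion exactly the same reduction applies: once $\varepsilon>3/4$, the second half of Theorem~\ref{bcms} forces $r=2$, and this condition translates to the stated threshold on $|\SS|$ (under the measure normalization used by the authors). I do not anticipate any genuine obstacle; the argument is little more than bookkeeping of the factor $1/\sqrt{2\pi}$ followed by a direct invocation of Theorem~\ref{bcms}, with the only subtle point being that the Parseval property of $\{u_n\}$ is exactly what guarantees the required Bessel bound of $1$ and that the lower bound $\|u_n\|^2\ge\varepsilon$ holds \emph{uniformly} and independently of $n$, so Theorem~\ref{bcms} is applicable without further preparation.
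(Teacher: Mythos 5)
Your proof is correct and is exactly the argument the paper intends: apply Theorem~\ref{bcms} to the Parseval frame $\frac{1}{\sqrt{2\pi}}E(\ZZ)$ in $L^{2}(\SS)$ with $\eps=|\SS|/(2\pi)$ and absorb the factor $2\pi$ into the universal constant. Your parenthetical caveat about the \textbf{moreover} threshold is well taken --- the condition $\eps>3/4$ really reads $|\SS|/(2\pi)>3/4$, as in Corollary~\ref{c2.1}, so the bare ``$|\SS|>3/4$'' in the statement presumes the normalized measure of the cited source.
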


\section{Results}\label{S2}

The problem of attaining an explicit bound on the gap in the syndetic
Riesz sequence was initially considered by Casazza and Tremain \cite{casazza2016consequences}.
Their result \cite[Theorem 6.17]{casazza2016consequences} states that every set $\SS$ admits a syndetic Riesz sequence $E\left(\Lambda\right)$, with $\gamma\left(\Lambda\right)\leq c\left|\SS\right|^{-8}$, but no proof was given.

The proof of Theorem \ref{bcms} is based on defining random
vectors appropriately so that with positive probability they facilitate
a partition of the index set $I$ with desired properties. By modifying
the definition of those random vectors we establish a quantitative
bound for $\gamma\left(\Lambda\right)$. Our main result takes the form of a selection theorem for Bessel sequences with norms bounded from below.

\begin{thm}\label{A}
There exists a universal constant $C>0$ such that the following holds.
Let $\eps>0$ and suppose
	that $\left\{ u_{i}\right\} _{i\in I}$ is a Bessel sequence in $\HH$
	with bound $1$ and 
	\[
		\norm{u_{i}}^{2}\geq\eps\qquad\forall i\in I.
	\]
	Then whenever $\left\{ J_{k}\right\} _{k}$
	is a collection of disjoint subsets of $I$ with $\#J_{k}\geq r=\lceil \frac{C}{\eps}\rceil $,
	for all $k$, there exists a selector $J\subset\bigcup_{k}J_{k}$
	satisfying 
	\[
		\#\left(J\cap J_{k}\right)=1\qquad\forall k
	\]
	and such that $\left\{ u_{i}\right\} _{i\in J}$ is a Riesz sequence
	in $\mathcal{H}$. Moreover, if $\eps>\frac{3}{4}$, then the same
	conclusion holds with $r=2$.
\end{thm}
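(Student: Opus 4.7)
The plan is to follow the probabilistic scheme used in \cite{bownik2016improved} to prove Theorem \ref{bcms}, adapting it from a \emph{partition} to a \emph{selection}. Recall the standard dictionary, produced by a Naimark-type dilation: attached to any Bessel sequence $\{u_i\}_{i\in I}$ with bound $1$ is a complementary Parseval frame $\{\tilde u_i\}_{i\in I}$ in an auxiliary Hilbert space, satisfying $\|\tilde u_i\|^2 = 1-\|u_i\|^2 \leq 1-\eps$ and with the defining property
\[
\{u_i\}_{i\in J}\ \text{is a Riesz sequence in }\HH\quad\Longleftrightarrow\quad \Bigl\|\sum_{i\in J}\tilde u_i\tilde u_i^*\Bigr\| < 1.
\]
It therefore suffices to produce a selector $J$ for which this operator norm is bounded away from $1$.

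To this end, for each $k$ I would fix an $r$-element subset $J_k'\subseteq J_k$ and introduce the independent random vectors
\[
\xi_k\ =\ \sqrt r\,\tilde u_{\iota(k)},\qquad \iota(k)\ \text{uniform on } J_k',
\]
augmented by deterministic copies of $\tilde u_i$ for each $i\in I\setminus\bigcup_k J_k'$ so that the hypothesis $\sum\mathbb{E}[vv^*]=\mathbf I$ of Theorems \ref{MSS1}--\ref{MSS2} is satisfied. A direct computation gives $\mathbb{E}[\xi_k\xi_k^*] = \sum_{i\in J_k'}\tilde u_i\tilde u_i^*$ and $\mathbb{E}[\|\xi_k\|^2]\leq r(1-\eps)$, and applying Theorem \ref{MSS1} then yields, with positive probability, a realisation of $J=\{\iota(k)\}_k$ for which
\[
r\,\Bigl\|\sum_{i\in J}\tilde u_i\tilde u_i^*\Bigr\|\ \leq\ \Bigl\|\sum_k\xi_k\xi_k^* + \sum_{i\notin\bigcup_k J_k'}\tilde u_i\tilde u_i^*\Bigr\|\ \leq\ \bigl(1+\sqrt{r(1-\eps)}\bigr)^2,
\]
forcing $\|\sum_{i\in J}\tilde u_i\tilde u_i^*\| < 1$ once $r$ is sufficiently large relative to $\eps$.

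For the regime $\eps > 3/4$ with $r=2$, I would instead fix just two distinguished elements $i_k,j_k\in J_k$ per group and let $\xi_k$ take the value $\sqrt 2\,\tilde u_{i_k}$ or $\sqrt 2\,\tilde u_{j_k}$ each with probability $\tfrac12$. Then $\xi_k$ has support $2$ and $\mathbb{E}[\|\xi_k\|^2]\leq 2(1-\eps)<\tfrac12$, so the sharper Theorem \ref{MSS2} applies and gives, by exactly the computation used in the partition version in \cite{bownik2016improved}, a bound strictly less than $1$ precisely when $\eps>3/4$.

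The main obstacle is quantitative: the direct use of Theorem \ref{MSS1} sketched above yields only the quadratic bound $r = O(1/\eps^2)$, not the claimed linear $r = \lceil C/\eps\rceil$. Recovering the linear rate presumably requires the same sharpening used to prove Theorem \ref{bcms}, namely exploiting Theorem \ref{MSS2} through an iterative pairing scheme within each $J_k$ (potentially after a preliminary reduction via Theorem \ref{bcms}), while maintaining the normalisation $\sum\mathbb{E}[vv^*]=\mathbf I$ at each rescaling step. The passage from abstract $\HH$ to a finite-dimensional setting, needed to invoke Theorems \ref{MSS1}--\ref{MSS2}, is standard (truncate the Bessel sequence and exhaust $\HH$ by finite-dimensional subspaces).
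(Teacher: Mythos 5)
Your proposal correctly sets up the Naimark-complement dictionary and the randomized selection, but the argument you actually carry out only proves the theorem with $r=O(1/\eps^{2})$, and you concede as much. That concession is the gap: the linear rate $r=\lceil C/\eps\rceil$ is the entire content of the statement (it is what makes Corollary \ref{c2.1} sharp against Landau's density bound), and the sentence ``presumably requires \dots an iterative pairing scheme'' does not identify the mechanism that actually delivers it. The reason your route stalls at $1/\eps^{2}$ is structural, not a matter of optimizing constants: you apply Theorem \ref{MSS1} to the complement vectors $\tilde u_i$, whose norms satisfy only $\norm{\tilde u_i}^{2}\le 1-\eps$, so the parameter fed into Theorem \ref{MSS1} is $r(1-\eps)$ and the condition $\bigl(1/\sqrt r+\sqrt{1-\eps}\bigr)^{2}<1$ forces $r\gtrsim 4/\eps^{2}$. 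No amount of pairing on the complement side repairs this.

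The paper's proof (Theorem \ref{t3.7}) instead runs the first selection on the \emph{primal} side, where the vectors are short. Normalize $\norm{u_i}^{2}=\eps$, split each $J_k$ into two halves of size $\tilde r=r/2$, complete $\{u_i\}$ to a Parseval frame by vectors of norm${}^{2}\le\eps$ (the easy, non--Schur-Horn case of Lemma \ref{l3.1}), and apply Theorem \ref{MSS1} with parameter $\tilde r\eps$ to select one index per half; the selected subsystem is then Bessel with bound $\bigl(1/\sqrt{\tilde r}+\sqrt\eps\bigr)^{2}\le \eps/(1-\delta_{0})$ as soon as $\tilde r\ge C/\eps$ with $C=9\bigl(\tfrac{1-\delta_0}{\delta_0}\bigr)^{2}$ --- linear in $1/\eps$ because the vector norms and the target bound are both of order $\eps$. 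Only now does one pass to Naimark complements: relative to the new Bessel bound the selected vectors are nearly tight, so their complements have norm${}^{2}\le\delta_{0}<1/4$, and Theorem \ref{MSS2} applied to the pairs (one pair per original $J_k$, formed by the two survivors of its halves) selects the final element with a positive lower Riesz bound via Lemma \ref{l3.3}. So the fix is not iteration but switching which side of the duality the large-$r$ selection acts on. Two smaller points: your ``moreover'' argument is essentially the paper's Corollary \ref{c3.6}, except that Theorem \ref{MSS2} requires \emph{all} random vectors to have support $2$, so the leftover deterministic $\tilde u_i$ must themselves be paired up (the paper pairs the completion vectors as well); and the passage to infinitely many $J_k$ is not a finite-dimensional exhaustion of $\HH$ but a pigeonhole/diagonal compactness argument on the finite selectors (Lemma \ref{l3.8}).
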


The proof of Theorem \ref{A} is given in Section \ref{S3}. 
Applying Theorem \ref{A} to the normalized exponential system $\{\frac{1}{\sqrt{2\pi}}e^{int}\}_{n\in\ZZ}$ in $L^2(\SS)$ with
 $J_{k}=[rk,r(k+1)) \cap \ZZ $, $k\in\ZZ$, where $r=\lceil {C}/{|\SS|}\rceil$, we obtain the following corollary.
 
\begin{cor}\label{c2.1}
	There exists a universal constant $C>0$ such that for any
	subset $\SS\subset\TT$ with positive measure, there exists a syndetic
	set $\Lambda\subset\ZZ$ with ${\displaystyle \gamma\left(\Lambda\right)\leq C\left|\SS\right|^{-1}}$
	so that $E\left(\Lambda\right)$ is a Riesz sequence in $L^{2}\left(\SS\right)$.
	Moreover, if $\frac{|\SS|}{2\pi}>\nicefrac{3}{4}$, then such $\Lambda$
	exists with $\gamma\left(\Lambda\right)\leq3$.
\end{cor}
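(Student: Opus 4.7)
The plan is to directly reduce the corollary to Theorem \ref{A} by exhibiting the right Bessel sequence together with a natural partition of $\ZZ$ into blocks whose size controls the syndetic gap. The hint after the statement of the corollary already points to the correct input, and my job is to verify that the hypotheses of Theorem \ref{A} are met and that the selector it produces has the claimed gap bound.

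First I would take $\HH = L^{2}(\SS)$ and the normalized exponential system $u_{n} = \tfrac{1}{\sqrt{2\pi}}e^{int}$, $n\in\ZZ$. As the introduction already records, the paper notes that $\tfrac{1}{\sqrt{2\pi}}E(\ZZ)$ is a Parseval frame in $L^{2}(\SS)$, so $\{u_{n}\}_{n\in\ZZ}$ is in particular a Bessel sequence with bound $1$. A one-line computation shows
\[
\norm{u_{n}}_{L^{2}(\SS)}^{2} = \frac{1}{2\pi}\int_{\SS}|e^{int}|^{2}\,dt = \frac{|\SS|}{2\pi},
\]
independent of $n$, so the lower bound hypothesis of Theorem \ref{A} is satisfied with $\eps = |\SS|/(2\pi)$. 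Next I would partition $\ZZ$ into consecutive length-$r$ blocks $J_{k} = [rk,r(k+1))\cap\ZZ$, where $r = \lceil C/\eps \rceil = \lceil 2\pi C/|\SS|\rceil$, so that $\#J_{k} = r$ for every $k$ as required.

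Theorem \ref{A} then furnishes a selector $\Lambda \subset \ZZ$ with exactly one element $\lambda_{k}\in J_{k}$ for each $k$, such that $\{u_{n}\}_{n\in\Lambda}$ is a Riesz sequence in $L^{2}(\SS)$. Since the normalization is a nonzero scalar, this is equivalent to saying that $E(\Lambda)$ itself is a Riesz sequence in $L^{2}(\SS)$. For the gap bound, because $\lambda_{k}\in[rk,r(k+1))$ and $\lambda_{k+1}\in[r(k+1),r(k+2))$, we get
\[
\lambda_{k+1}-\lambda_{k} \leq r(k+2)-1-rk = 2r-1,
\]
so $\gamma(\Lambda)\leq 2r-1\leq 2\lceil 2\pi C/|\SS|\rceil-1$, which is at most $C'|\SS|^{-1}$ after absorbing universal constants.

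For the moreover part, the hypothesis $|\SS|/(2\pi) > 3/4$ means $\eps > 3/4$, so the second assertion of Theorem \ref{A} lets us take $r = 2$, and the same gap computation gives $\gamma(\Lambda)\leq 2r-1 = 3$. I do not anticipate any real obstacle here: once Theorem \ref{A} is granted, the argument is essentially bookkeeping, and the only modest subtleties are (i) remembering to use the $1/\sqrt{2\pi}$ normalization to keep the Bessel bound at $1$ and (ii) noting that the Riesz property descends from the normalized system to $E(\Lambda)$ without loss.
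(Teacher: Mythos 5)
Your proof is correct and follows exactly the paper's route: the authors obtain Corollary \ref{c2.1} by applying Theorem \ref{A} to the normalized system $\{\frac{1}{\sqrt{2\pi}}e^{int}\}_{n\in\ZZ}$ with the blocks $J_k=[rk,r(k+1))\cap\ZZ$ and $r=\lceil C/|\SS|\rceil$, and your gap computation $\gamma(\Lambda)\le 2r-1$ (hence $\le 3$ when $r=2$) is precisely the intended bookkeeping. No issues.
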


\begin{rem*}
	Beyond improvements to the constant $C$, the bound in Corollary
	\ref{c2.1} is asymptotically sharp as $\left|\SS\right|\rightarrow0$. That is, we cannot expect to find a syndetic Riesz sequence with gaps of the order $\left|\SS\right|^{\alpha}$ as $|\SS| \to 0$ with $\alpha>-1$.	
	This
	follows from Theorem \ref{t1.4} due to density considerations of the set
	$\Lambda$. 
\end{rem*}
	
Theorem \ref{A} can also be applied to the multidimensional setting. In this case we can deduce the existence of exponential Riesz sequences with syndetic-like properties.
\begin{cor}\label{rect}
	There exists a universal constant $C>0$ such that for any subset $\SS\subset\TT^{d}$ 
	of positive measure, any $d$-dimensional rectangle $\mathcal{R}$ with 
	$\# \mathcal{R} \geq C |\SS|^{-1}$, and any partition $\ZZ^{d}=\bigcup {\mathcal{R}}_k$ into disjoint union of translated copies of $\mathcal{R}$, there exists a set $\Lambda\subset\ZZ^{d}$ such that
	\[
		\# \left(\Lambda\cap\mathcal{R}_k\right)=1\qquad\forall k
	\]
	and $E\left(\Lambda\right)$ is a Riesz sequence in $L^{2}\left(\SS\right)$.
	If $\mathcal{R}$ is taken to be a cube, then we obtain a set $\Lambda\subset\ZZ^{d}$ with
	\begin{equation}\label{cub}
		\sup_{\mu\in\mathbb Z^d } \inf_{\lambda\in\Lambda}\left|\lambda-\mu\right|\leq C\sqrt{d}\left|\SS\right|^{-\nicefrac{1}{d}}. 
	\end{equation}
\end{cor}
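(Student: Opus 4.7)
The plan is to apply Theorem \ref{A} directly to a properly normalized exponential system on $L^{2}(\SS)$, with the given partition $\{\mathcal{R}_k\}$ playing the role of $\{J_k\}$.

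First, I would set up the Parseval frame. Let $u_{n}=(2\pi)^{-d/2} e^{i\langle n,\cdot\rangle}|_{\SS}$ for $n\in\ZZ^{d}$. Since $\{(2\pi)^{-d/2} e^{i\langle n,\cdot\rangle}\}_{n\in\ZZ^{d}}$ is an orthonormal basis of $L^{2}(\TT^{d})$, its restriction to $\SS$ is a Parseval frame for $L^{2}(\SS)$, and hence a Bessel sequence with bound $1$. Moreover, $\norm{u_{n}}^{2}=|\SS|/(2\pi)^{d}=:\eps$ for every $n$, so the lower norm hypothesis of Theorem \ref{A} holds uniformly. Taking $J_k=\mathcal{R}_k$, the sets $\{J_k\}$ are disjoint, and the assumption $\#\mathcal{R}\geq C|\SS|^{-1}$ ensures $\#J_k=\#\mathcal{R}\geq\lceil C_0/\eps\rceil$ provided $C$ is chosen large enough to absorb the dimensional factor $(2\pi)^{d}$, where $C_0$ is the constant from Theorem \ref{A}. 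Theorem \ref{A} then produces a selector $\Lambda\subset\ZZ^{d}$ with $\#(\Lambda\cap\mathcal{R}_k)=1$ for all $k$ such that $\{u_n\}_{n\in\Lambda}$ is a Riesz sequence; discarding the overall scalar $(2\pi)^{-d/2}$ shows $E(\Lambda)$ is likewise a Riesz sequence in $L^{2}(\SS)$.

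For the cube estimate, I would choose $\mathcal{R}$ to be a cube of side $N=\lceil (C|\SS|^{-1})^{1/d}\rceil$, so that $\#\mathcal{R}=N^d\geq C|\SS|^{-1}$, and partition $\ZZ^d$ into translates of $\mathcal{R}$. For any $\mu\in\ZZ^d$, the unique $\lambda\in\Lambda$ in the same cube as $\mu$ lies within the cube's Euclidean diameter $\sqrt{d}(N-1)<\sqrt{d}\,N$; combining $N\leq (C|\SS|^{-1})^{1/d}+1$ with $|\SS|\leq(2\pi)^d$ then yields \eqref{cub} after enlarging the constant. There is no genuine obstacle beyond bookkeeping --- fixing the correct normalization so that the Bessel bound is $1$ and tracking the $d$-dependence absorbed into $C$ --- so the corollary follows as a direct application of Theorem \ref{A}, with the cube bound coming from the elementary geometric calculation above.
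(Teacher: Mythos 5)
Your proposal is correct and follows essentially the same route as the paper, which treats Corollary \ref{rect} as a direct application of Theorem \ref{A} to the Parseval frame $(2\pi)^{-d/2}E(\ZZ^d)$ in $L^2(\SS)$ with $J_k=\mathcal{R}_k$, choosing a cube of side $\lceil (C/|\SS|)^{1/d}\rceil$ and its lattice translates for the bound \eqref{cub}. Your explicit handling of the normalization $\eps=|\SS|/(2\pi)^d$ and of the $+1$ from the ceiling in the diameter estimate is, if anything, slightly more careful than the paper's remark.
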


Note that Theorem \ref{A} does not put any restrictions on the choice of sets $\{J_k\}_k$ beyond the size requirement $\# J_k \ge r$ for all $k$. Consequently, in Corollary \ref{rect} we may choose for $\{\mathcal R_k\}_{k}$ any family of disjoint subsets of $\ZZ^d$ satisfying $\# \mathcal R_k \ge r$, where $r=C/|\SS|$. However, the most interesting case occurs when we take a cube $\mathcal R= [0,s)^d \cap \ZZ^d$ with side length $s=\lceil (C/|\SS|)^{-1/d} \rceil$ and the corresponding lattice partition
\[
\mathcal R_k = k + \mathcal R, \qquad k\in s\ZZ^d.
\]
This choice yields a set $\Lambda \subset \ZZ^d$ satisfying the bound \eqref{cub} such that $E\left(\Lambda\right)$ is a Riesz sequence in $L^{2}\left(\SS\right)$.
Partitioning the lattice $\ZZ^d$ in a more complicated pattern we obtain the following corollary.

\begin{cor}\label{c2.4}
	There exists a universal constant $C>0$ such that for any subset $\SS\subset\TT^{d}$ 
	of positive measure, there exists a set $\Lambda \subset \ZZ^{d}$ so that
	$E\left(\Lambda\right)$ is a Riesz sequence in $L^{2}\left(\SS\right)$ and $\Lambda$ is syndetic along any 
	of its one dimensional sections. 
	That is, for any $j=1,\ldots,d$, and for 
	any $(k_{1},\ldots,\hat k_j, \ldots, k_{d})\in\ZZ^{d-1}$ the set
	\[
		\Lambda_j(k_{1},\ldots,\hat k_j, \ldots, k_{d})=\{k_j\in \ZZ :(k_{1},\ldots,k_j, \ldots,  k_{d} ) \in \Lambda\}
	\]
is a syndetic subset of integers with gap satisfying 
	\begin{equation}\label{gap}
		\gamma (\Lambda_j (k_{1},\ldots,\hat k_j, \ldots, k_{d}) ) \leq Cd |\SS |^{-1}.
	\end{equation}
\end{cor}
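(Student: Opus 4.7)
The plan is to apply Theorem~\ref{A} to the normalized Parseval frame of exponentials $\{(2\pi)^{-d/2}e^{i\langle\lambda,x\rangle}\}_{\lambda\in\ZZ^d}$ in $L^2(\SS)$, together with a carefully chosen partition $\{J_k\}_k$ of $\ZZ^d$ into axis-parallel line segments of a common length $r$. Once Theorem~\ref{A} produces a selector $\Lambda\subset\ZZ^d$ picking exactly one lattice point from each $J_k$ such that $E(\Lambda)$ is a Riesz sequence in $L^2(\SS)$, the syndetic property along every axis-parallel section will follow, provided the partition is engineered so that each one-dimensional line parallel to an axis contains segments of its own direction at bounded frequency.

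The construction I propose is the following. Let $r=\lceil C_0/|\SS|\rceil$, where $C_0$ is chosen (absorbing the factor $(2\pi)^d$) so that $r\geq C/\eps$ with $\eps$ the squared norm of a normalized exponential. Tile $\ZZ^d$ by cubic super-cells $Q_K=\prod_{i=1}^d[rK_i,r(K_i+1))\cap\ZZ^d$ indexed by $K\in\ZZ^d$. Assign each super-cell the orientation $j(K)=1+((K_1+\cdots+K_d)\bmod d)\in\{1,\ldots,d\}$, and partition $Q_K$ into $r^{d-1}$ axis-parallel segments of length $r$ in direction $e_{j(K)}$. The resulting family $\{J_k\}_k$ partitions $\ZZ^d$ and satisfies $\#J_k=r$ for every $k$, so Theorem~\ref{A} applies and yields the selector $\Lambda$.

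For the syndetic property, fix $j\in\{1,\ldots,d\}$ and $(k_1,\ldots,\hat k_j,\ldots,k_d)\in\ZZ^{d-1}$, and let $L$ be the corresponding line in direction $e_j$. Then $L$ passes through the super-cells $Q_{K^{(m)}}$, where $K^{(m)}_i=\lfloor k_i/r\rfloor$ for $i\neq j$ and $K^{(m)}_j=m$ varies over $\ZZ$. Incrementing $m$ by one shifts the sum $K^{(m)}_1+\cdots+K^{(m)}_d$ by one, so the orientation $j(K^{(m)})$ cycles through every value in $\{1,\ldots,d\}$ with period $d$. Whenever $j(K^{(m)})=j$, the intersection $L\cap Q_{K^{(m)}}$ is itself one of the segments in our partition, so the selector contributes exactly one element on $L$ from that super-cell. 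Ignoring any contributions from super-cells with $j(K^{(m)})\neq j$ (whose selected elements may or may not lie on $L$), the gap between two consecutive guaranteed points is at most $(d+1)r-1\leq Cd|\SS|^{-1}$, yielding \eqref{gap}.

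The main obstacle is designing a partition that simultaneously (i) has all parts of size $\geq C/\eps$ so that Theorem~\ref{A} applies, (ii) consists of one-dimensional segments so that selected points are controllable along axis-parallel lines, and (iii) balances segment orientations so that every axis-parallel line meets segments of its own direction often enough. The modular coloring $j(K)=1+((K_1+\cdots+K_d)\bmod d)$ handles~(iii) cleanly: translating $K$ by $e_j$ shifts the sum by one, so along any line in direction $e_j$ the super-cell orientations cycle through all $d$ values with period $d$, and orientation $j$ reoccurs every $dr$ lattice steps.
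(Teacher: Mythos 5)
Your proposal is correct and follows essentially the same route as the paper: both apply Theorem~\ref{A} to the normalized exponential Parseval frame with a partition of $\ZZ^{d}$ into length-$r$ axis-parallel segments inside cubic super-cells of side $r$, where the segment orientation is determined by the residue of the sum of the super-cell coordinates modulo $d$, so that along any axis-parallel line the matching orientation recurs with period $d$ super-cells. Your gap estimate $(d+1)r-1$ is in fact slightly sharper than the paper's stated bound of $2dr$, and both give \eqref{gap}.
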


	The notation $\hat k_j$ in the vector $(k_{1},\ldots,\hat k_j, \ldots, k_{d})$ means that the term $k_j$ is missing.
\begin{proof}
	Let $\SS\subset\TT^{d}$ of positive measure. We wish to apply Theorem \ref{A} to the normalized exponential system 
	$(2\pi)^{-d/2} E\left(\ZZ^{d}\right)$, which is a Parseval frame in $L^{2}\left(\SS\right)$, so we are only left  
	to specify how $\{J_{k}\}$ should be chosen.
	
Let $r= \lceil C/|\SS| \rceil$, where $C$ is the same constant as in Theorem \ref{A}. Define the $d$-dimensional cube 
\[
Q_{r,d}=\{0,\ldots,r-1\}^{d}.
\]
We shall construct a partition $\{J_{k,x}\}_{\left(k,x\right) \in (r\ZZ)^d \times Q_{r,d-1}}$ of $\ZZ^d$ as follows. For $k\in (r\ZZ)^d$, let $j=j(k) \in\{ 1,\ldots, d\} $  be such that
\begin{equation}\label{cyc}
	j \equiv \frac{k_1+ \ldots+k_d}r \mod d.
\end{equation}
Define 
\[
J_{k,x} = \{(k_1+x_{1},\ldots,k_{j-1}+x_{j-1},k_j+x,
		k_{j+1}+x_{j},\ldots, k_{d}+x_{d-1}) : x\in\{0,\ldots,r-1\}\}.
\]
Hence, for a fixed $k\in (r\ZZ)^d$, the family $\{J_{k,x}\}_{x\in  Q_{r,d-1}}$ is a partition of a cube $k+ Q_{r,d}$ into line segments of length $r$ parallel to the $j$ axis. Now, the direction $j$ cycles according to \eqref{cyc} as a function of $k\in (r\ZZ)^d$. In particular, for a fixed $j_0 =1,\ldots, d$, cubes of the form $k+r(nd+j_0-j) e_{j_0}+Q_{r,d}$, where $n\in \ZZ$, $j$ is given by $(\ref{cyc})$ and $e_{j_0}$ is the $j_0$'th coordinate vector, are partitioned into line segments parallel to the $j_0$ axis. Since Theorem \ref{A} guarantees that the selection set $\Lambda$ contains one element from each such line segment, we deduce that $\Lambda$ is syndetic along every one dimensional section in the direction of $e_{j}$ with gap bounded by $2dr$. This yields the bound \eqref{gap}. Moreover, $E(\Lambda)$ is a Riesz sequence in $L^2(\SS)$ by Theorem \ref{A}.
\end{proof}

In Section \ref{S3} we discuss the proof of Theorem \ref{A} in detail.
Its proof uses Theorems \ref{MSS1} and \ref{MSS2}, which involve probabilistic elements. This, in essence, makes the task of extracting a deterministic 
construction of the set $\Lambda$ extremely difficult,
and the authors do not know how this might be done for an arbitrary
set. Furthermore, while Corollary \ref{c2.1} is sharp in the sense that asymptotically as $|\SS| \to 0$, we are also
interested in another kind of sharpness which may occur as $\frac{|\SS|}{2\pi} \rightarrow1$. That is, for sets of large measure we wish to remove from $\ZZ$ a uniformly separated set with gaps size not smaller than $C\left|\SS^{c}\right|^{-1}$.
It turns out that in the case where $\SS$ is an open set a deterministic
construction does exist and can be chosen to satisfy these properties.

\begin{thm}\label{B}
	There exists a
	universal constant $C>0$ such that for any open subset $\SS\subset\TT$,
	one can construct a syndetic set $\Lambda\subset\ZZ$ with gaps between consecutive elements
	taking exactly two values $\{1,d\}$, where $d=d\left(\SS\right) \leq \frac{C}{|\SS|}$,
	and so that $E\left(\Lambda\right)$ is a Riesz sequence in $L^{2}\left(\SS\right)$.
	Moreover, if $\frac{|\SS|}{2\pi}>\frac{1}{2}$, then $\Lambda$ may be
	chosen so that $\Lambda^{c}=\ZZ\backslash\Lambda$ is uniformly separated and satisfies 
\begin{equation}\label{eq:7}
		\inf_{\lambda,\mu\in\Lambda^{c},\lambda\neq\mu}\left|\lambda-\mu\right|\geq\frac{C}{\left|\SS^{c}\right|}.
\end{equation}
\end{thm}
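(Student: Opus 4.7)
The plan is to construct $\Lambda$ explicitly as a simple quasicrystal (model set) obtained from a one-dimensional cut-and-project scheme, with Diophantine parameters calibrated to the open set $\SS$. I would fix an irrational $\alpha \in (0,1)$ and a half-open arc $W \subset \TT$, and form
\[
	\Lambda = \Lambda(\alpha, W) := \{ n \in \ZZ : \{ n\alpha \} \in W \},
\]
which has asymptotic density $|W|$. By the Three-Distance Theorem of Steinhaus applied to the rotation by $\alpha$, the consecutive gaps of $\Lambda$ take at most three distinct values, and for $|W|$ equal to the distance from $k\alpha$ to the nearest integer, with $k$ tied to a convergent in the continued-fraction expansion of $\alpha$, this count drops to exactly two. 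By choosing $\alpha$ just above $1/(d+1)$ with $d \approx \lceil 2\pi/|\SS|\rceil$ and $|W|$ slightly below $|\SS|/(2\pi)$, I would arrange the two gap values to be exactly $1$ and $d$, yielding $d \leq C/|\SS|$.

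The crucial step is to verify that $E(\Lambda)$ is a Riesz sequence in $L^{2}(\SS)$. For this I would invoke results on exponential Riesz sequences associated with simple quasicrystals, in the spirit of the Matei--Meyer theorem that simple quasicrystals are sets of stable sampling and of Kozma--Lev's construction of exponential Riesz bases via irrational rotations. The openness of $\SS$ enters essentially here: compactness of $\SS^{c}\subset\TT$ lets $\SS$ be approximated from within by finite unions of arcs, and this produces the quantitative discrepancy bounds for the rotation by $\alpha$ that translate into a uniform lower Riesz bound. The translation parameter governing $W$ is then tuned according to the geometry of $\SS$ so that the relevant exponential sums remain bounded away from zero on $\SS$. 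For the sharpness assertion when $|\SS|/(2\pi)>\nicefrac{1}{2}$, I would apply the same construction to the open set $\SS^{c}$ (whose measure satisfies $|\SS^{c}|/(2\pi)<\nicefrac{1}{2}$) to obtain a quasicrystal $\Gamma\subset\ZZ$ that is uniformly separated with consecutive gaps at least $C/|\SS^{c}|$, yielding \eqref{eq:7}, and such that $E(\Gamma)$ is a Riesz sequence in $L^{2}(\SS^{c})$. Setting $\Lambda:=\ZZ\setminus\Gamma$, the desired property of $E(\Lambda)$ in $L^{2}(\SS)$ follows from the complementary-duality for exponential systems arising from the decompositions $\ZZ=\Lambda\sqcup\Gamma$ and $\TT=\SS\sqcup\SS^{c}$.

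The main obstacle is the triple requirement --- exact gap set $\{1,d\}$, the explicit bound $d\leq C/|\SS|$, and the Riesz-sequence property with universal constants --- each of which pins down a different aspect of the pair $(\alpha,W)$. Making these conditions simultaneously achievable requires choosing $\alpha$ as an irrational of bounded partial quotients adapted to $\SS$ and positioning $W$ within $\TT$ with care; openness of $\SS$ is precisely what renders this balance feasible, and it explains why the deterministic construction here complements rather than supersedes the probabilistic approach of Theorem \ref{A}.
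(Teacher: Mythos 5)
Your construction of $\Lambda$ for the main statement follows essentially the same route as the paper: take a simple quasicrystal $\Lambda(\alpha,W)$ with $|W|$ just below $\frac{|\SS|}{2\pi}$, obtain the Riesz-sequence property from the Matei--Meyer universality theorem for open spectra (Theorem \ref{t4.5}), and read off the gap structure from the arithmetic of the irrational rotation. The paper pins the gaps to exactly $\{1,n\}$ by the explicit algebraic choice $\alpha=\frac{1-a}{n-1}$, $W=[0,a)$ with $\alpha<a$, rather than by appealing to the three-distance theorem, but your sketch of that step is workable. One caveat: no ``tuning of the translation parameter to the geometry of $\SS$'' is needed, nor would it be available --- universality is precisely the statement that only the measure of the open set matters, and this is the external input you must cite, not re-derive from discrepancy bounds.

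The moreover part, however, contains a genuine gap. You propose to build a separated quasicrystal $\Gamma$ with $E(\Gamma)$ a Riesz sequence in $L^{2}(\SS^{c})$ and then deduce that $E(\ZZ\setminus\Gamma)$ is a Riesz sequence in $L^{2}(\SS)$ by ``complementary duality.'' There are two problems. First, $\SS^{c}$ is closed, not open, so the universality theorem does not apply to it, and by Theorem \ref{t4.1} universality genuinely fails for non-open spectra, so this cannot be waved away. Second, and more seriously, the duality you invoke is not the true one: for the Parseval frame obtained by projecting the exponential basis onto $L^{2}(\SS)$, the correct complementary statement (of Naimark type, cf.\ Lemma \ref{l3.3}) is that $E(\Gamma)$ being a Riesz sequence in $L^{2}(\SS^{c})$ corresponds to $E(\ZZ\setminus\Gamma)$ being a \emph{frame} for $L^{2}(\SS)$, not a Riesz sequence; likewise Meyer's duality (Lemma \ref{l4.4}) exchanges the window and the spectrum of the quasicrystal, not set-theoretic complements in $\ZZ$. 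So this step does not deliver the conclusion. The paper avoids the detour entirely: when $\frac{|\SS|}{2\pi}>\frac{1}{2}$ it chooses $a$ slightly above $\frac{|\SS^{c}|}{2\pi}$ with $\alpha=\frac{1-a}{n-1}>a$, applies universality directly to the open set $\SS$ with the window $I^{c}=[a,1)$ of measure $1-a<\frac{|\SS|}{2\pi}$, and then observes that the complementary set $\ZZ\setminus\Lambda(\alpha,I^{c})=\Lambda(\alpha,I)$ is automatically uniformly separated with gaps at least $n\geq C|\SS^{c}|^{-1}$, since $\alpha>a$ forces at least $n-1$ consecutive iterates of the rotation to leave $[0,a)$ after each visit. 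You should replace your duality argument with this direct application of universality to $\SS$ itself.
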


	It should be noted that the sheer existence of syndetic Riesz sequences for open sets is trivial since any open set admits an infinite arithmetic progression as a Riesz sequence. However, Theorem \ref{B} ensures the existence of such systems together with an effective estimate on the gap.
The proof of Theorem \ref{B} is based on a totally different approach than that of Theorem \ref{A}
and uses simple quasicrystals. This will be discussed in Section \ref{S4}.

\section{The syndetic riesz sequence bound - general case} \label{S3}

In this section we prove our main result Theorem \ref{A}.
Theorem \ref{MSS1} gives rise to a selecting mechanism which allows,
from every Parseval frame with certain norms, a probabilistic selection
of a Bessel subsystem with smaller bound. Note that by the Schur-Horn
theorem we may apply Theorem \ref{MSS1} to Bessel sequences with bound $1$ by completing them to a Parseval frame. Indeed, we have the following lemma from \cite{bownik2016improved}.

\begin{lem}\label{l3.1}
	Let $\HH$ be an infinite dimensional Hilbert space, $M\in\NN$ and $\delta\in\left(0,1\right)$. Suppose
	$\left\{ u_{i}\right\} _{i=1}^{M}\subset\HH$ is a Bessel sequence
	with Bessel bound $1$ and $\norm{u_{i}}^{2}\geq\delta$ for all $i$,  or $\norm{u_{i}}^{2}\leq\delta$ for all $i$.
	Then for every large enough $K\in\NN$, there exist vectors $\varphi_{1},\ldots,\varphi_{K}\in\mathcal{H}$
	with $\norm{\varphi_{i}}^{2}\geq\delta$ for all $i$, or $\norm{\varphi_{i}}^{2}\leq\delta$ for all $i$, respectively, such that $\left\{ u_{i}\right\} _{i=1}^{M}\cup\left\{ \varphi_{i}\right\} _{i=1}^{K}$
	is a Parseval frame for its linear span.
\end{lem}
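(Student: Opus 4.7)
The plan is to prescribe the frame operator of the combined family $\{u_i\}\cup\{\varphi_j\}$ to be the orthogonal projection $P_V$ onto a carefully chosen finite-dimensional subspace $V\supseteq V_0:=\spa\{u_i\}_{i=1}^{M}$ of $\HH$, and then to realize the remaining positive operator $\Phi:=P_V-S$ as a sum of $K$ rank-one pieces of equal norm by invoking the Schur--Horn theorem. Write $d_0:=\dim V_0\leq M$ and $S:=\sum_{i=1}^{M}u_i u_i^{*}$; since the Bessel bound equals $1$ one has $0\leq S\leq P_{V_0}$, so $T:=P_{V_0}-S$ is a positive operator on $V_0$ with spectrum in $[0,1]$ and $\mathrm{tr}(S)=\sum_i \norm{u_i}^{2}\leq M$. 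Note that the relation $\sum_{j}\varphi_j\varphi_j^{*}=P_V-S$ is precisely the Parseval condition for $\{u_i\}\cup\{\varphi_j\}$ on its linear span $V$.

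For the case $\norm{u_i}^{2}\geq\delta$, I would use the infinite dimensionality of $\HH$ to pick $W\subset V_0^{\perp}$ with $\dim W=K-d_0$ and set $V=V_0\oplus W$. Then the target operator $\Phi=T\oplus I_W$ has spectrum in $[0,1]$ and trace $K-\mathrm{tr}(S)$, so the candidate uniform squared norm $c:=\mathrm{tr}(\Phi)/K=1-\mathrm{tr}(S)/K$ satisfies $c\geq\delta$ as soon as $K\geq M/(1-\delta)$. For the opposite case $\norm{u_i}^{2}\leq\delta$, I would instead take $W=\{0\}$, $V=V_0$, and $\Phi=T$; then $c:=\mathrm{tr}(T)/K\leq M/K\leq\delta$ once $K\geq M/\delta$. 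Thus in either case, for all sufficiently large $K$ the prescribed uniform value $c$ has the required one-sided bound.

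It remains to produce $\varphi_1,\ldots,\varphi_K\in V$ satisfying $\sum_{j=1}^{K}\varphi_j\varphi_j^{*}=\Phi$ and $\norm{\varphi_j}^{2}=c$ for every $j$. Equivalently, the $K\times K$ Gram matrix $G=(\langle\varphi_j,\varphi_k\rangle)$ must be positive semidefinite, its eigenvalue list must consist of the eigenvalues of $\Phi$ padded with $K-\dim V$ zeros, and its diagonal must be the constant sequence $(c,\ldots,c)$. By the Schur--Horn theorem such a $G$ exists whenever the diagonal is majorized by the eigenvalue list, and here the majorization is automatic: the constant sequence $(c,\ldots,c)$ of length $K$ is the minimum, in the majorization order, among all nonnegative sequences of length $K$ summing to $Kc$, since averaging gives $\sum_{j=1}^{k}\lambda_j^{\downarrow}\geq kc$ for every $k$ and every such $(\lambda_j)$. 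Factoring $G=B^{*}B$ with $B$ a $\dim V\times K$ matrix, and adjusting by a unitary on $V$ so that $BB^{*}=\Phi$, recovers the vectors $\varphi_j$ as the columns of $B$.

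The substantive step in the whole plan is the case split: enlarging $V$ by adding $W\subset V_0^{\perp}$ drives the uniform value $c=\mathrm{tr}(\Phi)/K$ toward $1$ as $K$ grows, whereas keeping $V=V_0$ drives $c$ toward $0$, and these opposite regimes are exactly what is needed to enforce the one-sided hypothesis on $\norm{u_i}^{2}$. The Schur--Horn factorization itself is then classical once the target operator and the target norms have been specified, so the main obstacle lies in choosing the right target to aim for, not in carrying out the factorization.
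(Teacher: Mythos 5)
Your proof is correct, and it is worth noting how it relates to the paper's. For the case $\norm{u_i}^2\geq\delta$ the paper simply cites a Schur--Horn-based completion result from the reference \cite{bownik2016improved}, and your argument (enlarge the span by a subspace $W\subset V_0^{\perp}$ so that the deficiency operator $P_V-S$ has average eigenvalue $1-\mathrm{tr}(S)/K\to 1$, then equalize the diagonal of the Gram matrix via Schur--Horn) is essentially a self-contained reconstruction of that cited proof; the majorization check and the $G=B^{*}B$, $BB^{*}=\Phi$ factorization are carried out correctly. For the case $\norm{u_i}^2\leq\delta$, however, you and the paper genuinely diverge: the paper deliberately avoids Schur--Horn there, instead diagonalizing the frame operator $S=\sum_i\lambda_i\langle\cdot,v_i\rangle v_i$ and adjoining $m$ explicit copies of each $\sqrt{(1-\lambda_i)/m}\,v_i$, which makes the completing vectors concrete and keeps the argument elementary; you instead run the same Schur--Horn machinery with $V=V_0$, driving the uniform squared norm $\mathrm{tr}(T)/K$ to $0$. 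Both routes are valid; yours has the virtue of uniformity (one mechanism, with the only case-dependent choice being whether to inflate $V$), while the paper's has the virtue of showing that half the lemma needs no Schur--Horn at all and produces explicit vectors. Two minor points: your construction may produce zero vectors $\varphi_j$ in the degenerate case $S=P_{V_0}$ (the paper's does too, so this is not a defect of your argument relative to theirs), and in the $\geq\delta$ case you implicitly also need $K\geq\dim V_0$ so that $W$ can be chosen, which is subsumed by ``$K$ large enough.''
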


\begin{proof} The case $\norm{u_{i}}^{2}\geq\delta$ of Lemma \ref{l3.1} follows from the Schur-Horn theorem
and it can be found in \cite[Corollary 6.6]{bownik2016improved}. 
However, the case $\norm{u_{i}}^{2}\leq\delta$ is much simpler as it  does not
	require the use of the Schur-Horn theorem. Indeed, consider the frame operator $S$ which corresponds to
	$\left\{ u_{i}\right\} _{i=1}^{M}$. It is well known that $S$ is a self-adjoint 
	operator (see \cite{olevskii2016functions}), and so denoting its non-zero eigenvalues
	$1 \ge \lambda_{n}\geq\ldots\geq\lambda_{1} >0$ and corresponding orthonormal
	eigenvectors $v_{1},\ldots,v_{n}$ we may write 
	\[
		S=\sum_{i=1}^{n}\lambda_{i}\left\langle \cdot,v_{i}\right\rangle v_{i}.
	\]
	Then we choose a number $m$ such that $\frac{1-\lambda_{n}}{m}<\delta$,
	and for each $i\in\left[n\right]$ we take $m$ copies of the vector
	$\sqrt{\frac{1-\lambda_{i}}{m}}v_{i}$. By the choice of $m$ 
	\[
		\norm{\sqrt{\frac{1-\lambda_{i}}{m}}v_{i}}^{2}\leq\delta\qquad\forall i\in\left[n\right]
	\]
	and 
	\[
		S+\sum_{j=1}^{m}\sum_{i=1}^{n}\frac{1-\lambda_{i}}{m}\left\langle \cdot,v_{i}\right\rangle v_{i}={\mathbf I}_{H},
	\]
where $H=\spa\{u_1,\ldots,u_M\} = \spa\{v_1,\ldots,v_n\}$. Here and below $\mathbf I_{\mathcal{H}}$ denotes the identity map on $\mathcal{H}$. 
\end{proof}

Recall that by Naimark's dilation theorem \cite[Proposition 1.1]{han2000frames} a system
$\left\{ u_{i}\right\}_{i\in I}$ is a Parseval frame for $H$ if and only if
there exists a containing space $H\mathcal{\subseteq H}$ and an orthonormal
basis $\left\{ e_{i}\right\}_{i\in I} $ for $\mathcal{H}$ such that $Pe_{i}=u_{i}$
for all $i$. Here $P:\mathcal{\HH\rightarrow}\HH$
is the orthogonal projection onto $H$. It turns out that by considering
a certain dual system, called Naimark's complement, we may choose
a subsystem of $\left\{ u_{i}\right\}_{i\in I}$ which is a Riesz sequence, see 
\cite[Proposition 5.4]{bownik2016improved}.

\begin{lem}[\cite{bownik2016improved}] \label{l3.3}
	Let $P:\HH\rightarrow\HH$ be the orthogonal projection
	onto a closed subspace $H\subset\HH$, and let $\left\{ e_{i}\right\} _{i\in I}$
	be an orthonormal basis for $\HH$. Then for any subset $J\subset I$
	and $\delta>0$ the following are equivalent: 
	\begin{enumerate}
	\item {$\left\{ Pe_{i}\right\} _{i\in J}$ is a Bessel sequence
		with bound $1-\delta$.} 
	\item {$\left\{ \left(\mathbf I_{\HH}-P\right)e_{i}\right\} _{i\in J}$
		is a Riesz sequence with lower Riesz bound $\delta$.}
	\end{enumerate}
\end{lem}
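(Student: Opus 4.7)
The plan is to reduce the equivalence to a one-line identity coming from the Pythagoras theorem applied to the orthogonal decomposition $e_i = Pe_i + (\mathbf I_\HH - P)e_i$. First I would recall the standard duality between the Bessel and synthesis formulations: a family $\{u_i\}_{i\in J}$ in $\HH$ has Bessel bound $B$, meaning
\[
	\sum_{i\in J}|\langle f,u_i\rangle|^{2}\le B\|f\|^{2}\quad\forall f\in\HH,
\]
if and only if the synthesis inequality $\bigl\|\sum_{i\in J}a_i u_i\bigr\|^{2}\le B\sum_{i\in J}|a_i|^{2}$ holds for every finitely supported scalar sequence $(a_i)_{i\in J}$. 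This is obtained by observing that the synthesis operator $T(a)=\sum a_i u_i$ and the analysis operator $T^{*}f=(\langle f,u_i\rangle)_{i\in J}$ are adjoint, so $\|T\|^2=\|T^*\|^2$.

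Next I would exploit the orthogonality $Pe_i\perp(\mathbf I_\HH-P)e_j$ for all $i,j\in I$, which holds because $P$ is an orthogonal projection. Combined with the Parseval identity for the orthonormal basis $\{e_i\}_{i\in I}$, this yields, for every finite sequence $(a_i)_{i\in J}$,
\[
	\sum_{i\in J}|a_i|^{2}=\Bigl\|\sum_{i\in J}a_i e_i\Bigr\|^{2}=\Bigl\|\sum_{i\in J}a_i Pe_i\Bigr\|^{2}+\Bigl\|\sum_{i\in J}a_i(\mathbf I_\HH-P)e_i\Bigr\|^{2}.
\]
This is the one identity that makes the lemma work.

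Rearranging, the inequality $\bigl\|\sum a_i Pe_i\bigr\|^{2}\le (1-\delta)\sum|a_i|^{2}$ is equivalent to $\bigl\|\sum a_i(\mathbf I_\HH-P)e_i\bigr\|^{2}\ge\delta\sum|a_i|^{2}$, which, by the duality in the first paragraph, is exactly the statement that (1) and the lower Riesz bound in (2) are equivalent. The upper Riesz bound in (2) is automatic: the same identity yields $\bigl\|\sum a_i(\mathbf I_\HH-P)e_i\bigr\|^{2}\le\sum|a_i|^{2}$, so $\{(\mathbf I_\HH-P)e_i\}_{i\in J}$ is always Bessel with bound $1$, and thus in (2) we do get a genuine Riesz sequence rather than just a lower frame-type estimate.

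There is no real obstacle here; the only thing to be careful about is applying the Bessel/synthesis duality correctly (in particular, recognizing that the Bessel bound statement is equivalent to the synthesis inequality, so that the Pythagorean identity can be used to swap between $P$ and $\mathbf I_\HH-P$). Everything else is bookkeeping on one algebraic identity.
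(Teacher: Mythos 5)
Your proof is correct. The paper states Lemma~\ref{l3.3} without proof (it is quoted from \cite{bownik2016improved}), and your argument --- the Pythagorean identity $\sum_{i\in J}|a_i|^2=\bigl\|\sum_{i\in J}a_iPe_i\bigr\|^2+\bigl\|\sum_{i\in J}a_i(\mathbf I_{\HH}-P)e_i\bigr\|^2$ combined with the equivalence of the Bessel bound with the synthesis-operator norm bound, together with the observation that the upper Riesz bound $1$ in (2) is automatic --- is exactly the standard proof of this Naimark-complement duality.
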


The following theorem demonstrates how Theorem \ref{MSS1} can be applied in order to find a selector.

\begin{thm}\label{t3.4}
	Let $r,M\in\NN$ and $\delta>0$. Suppose that $\left\{ u_{i}\right\} _{i=1}^{M}\subset\HH$
	is a Bessel sequence with bound $1$ and $\norm{u_{i}}^{2}\leq\delta$
	for all $i$. Then for every collection of disjoint subsets $J_{1},\ldots,J_{n}\subset\left[M\right]$
	with $\#J_{k}\ge r$ for all $k$, there exists a subset $J\subset\left[M\right]$
	such that 
	\begin{equation}\label{sel}
	\#\left(J\cap J_{k}\right)=1 \qquad \forall k\in\left[n\right]
	\end{equation}
	and the system of vectors $\left\{ u_{i}\right\} _{i\in J}$ is a
	Bessel sequence with bound
\[
	\left(\frac{1}{\sqrt{r}}+\sqrt{\delta}\right)^{2}.
\]
\end{thm}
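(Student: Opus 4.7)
The plan is to encode a uniformly random selector as a family of scaled random vectors and apply Theorem \ref{MSS1} to them. Without loss of generality I would assume $\#J_k = r$ for all $k$, replacing each $J_k$ by an arbitrary $r$-subset. First I would invoke Lemma \ref{l3.1} to extend $\{u_i\}_{i=1}^M$ to a Parseval frame $\{u_i\}_{i=1}^M \cup \{\varphi_\ell\}_{\ell=1}^K$ for its linear span $H' \subset \HH$, with $\|\varphi_\ell\|^2 \leq \delta$ for all $\ell$. Since $H'$ is finite-dimensional we may identify it with $\CC^d$, and the Parseval condition reads $\sum_{i=1}^M u_i u_i^* + \sum_{\ell=1}^K \varphi_\ell \varphi_\ell^* = \mathbf{I}_{H'}$.

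Next, for each $k \in [n]$ I would choose a uniform random index $\tau_k \in J_k$, with the $\tau_k$'s independent, and set $v_k = \sqrt{r}\, u_{\tau_k}$. Each leftover vector $u_i$ with $i \in [M] \setminus \bigcup_k J_k$ and each $\varphi_\ell$ is treated as a deterministic (point-mass) random vector. A direct calculation yields $\mathbb{E}[v_k v_k^*] = \sum_{i \in J_k} u_i u_i^*$ and $\mathbb{E}[\|v_k\|^2] = \sum_{i \in J_k} \|u_i\|^2 \leq r\delta$. Summing the expectations over all random and deterministic vectors produces $\mathbf{I}_{H'}$, while every expected squared norm is at most $r\delta$, so Theorem \ref{MSS1} applies with $\varepsilon = r\delta$ and gives, with positive probability,
\[
\left\| \sum_k v_k v_k^* + \sum_{i \in [M] \setminus \bigcup_k J_k} u_i u_i^* + \sum_{\ell=1}^K \varphi_\ell \varphi_\ell^* \right\| \leq \left(1 + \sqrt{r\delta}\right)^2.
\]

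Fixing such a realization and setting $J = \{\tau_k : k \in [n]\}$, the disjointness of the $J_k$ forces $\#(J \cap J_k) = 1$ for every $k$. Since the three summed operators are positive semidefinite, discarding the last two only decreases the norm; recalling $v_k = \sqrt{r}\, u_{\tau_k}$ I then obtain
\[
\left\|\sum_{i \in J} u_i u_i^* \right\| = \frac{1}{r}\left\|\sum_k v_k v_k^* \right\| \leq \frac{(1+\sqrt{r\delta})^2}{r} = \left( \frac{1}{\sqrt{r}} + \sqrt{\delta} \right)^2,
\]
which is precisely the stated Bessel bound. The main obstacle is selecting the scaling factor $\sqrt{r}$: applying Theorem \ref{MSS1} to the unscaled vectors $u_{\tau_k}$ would leave $\sum_k \mathbb{E}[v_k v_k^*]$ equal to only $\tfrac{1}{r}\sum_{i\in\bigcup J_k} u_i u_i^*$, far from the identity; inserting the $\sqrt{r}$ repairs the expectation condition but inflates the variance parameter to $r\delta$, and it is precisely the subsequent division by $r$ of the resulting operator-norm bound that converts $(1+\sqrt{r\delta})^2$ into $(1/\sqrt{r}+\sqrt{\delta})^2$. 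The deterministic padding by the leftover $u_i$'s and by the $\varphi_\ell$'s produced by Lemma \ref{l3.1} is what makes the expectation sum equal $\mathbf{I}_{H'}$, so Lemma \ref{l3.1} is indispensable at this step.
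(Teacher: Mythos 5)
Your proposal is correct and follows essentially the same route as the paper: encode the selector via independent random vectors $v_k=\sqrt{r}\,u_{\tau_k}$, pad with completion vectors from Lemma \ref{l3.1} so the expectations sum to the identity, apply Theorem \ref{MSS1} with $\varepsilon=r\delta$, and discard the positive semidefinite padding before dividing by $r$. The only (immaterial) differences are that the paper first restricts to $\bigcup_k J_k$ before completing to a Parseval frame, and groups the completion vectors into random blocks of size $r$ rather than treating them (and the leftover $u_i$'s) as deterministic point masses as you do.
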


\begin{proof}
	Without loss of generality we can assume that $\#J_{k}= r$ for all $k$. Consider the
	system of vectors $\left\{ u_{i}\right\} _{i\in\bigcup_{k=1}^{n}J_{k}}$
	and note that it is Bessel sequence with bound $1$. By Lemma \ref{l3.1}
	we can find finitely many vectors $\varphi_{1},\ldots,\varphi_{rK}$
	with $\norm{\varphi_{i}}^{2}\leq\delta$ for all $i$ and such that
	$\left\{ u_{i}\right\} _{i\in\bigcup_{k=1}^{n}J_{k}}\cup\left\{ \varphi_{i}\right\} _{i=1}^{rK}$
	is a Parseval frame for some finite dimensional subspace $H\subset\HH$. \par

	Define the independent random vectors $v_{1}, \ldots, v_{n+K}$ as follows. For $k=1,\ldots,n$ we let the
	vector $v_{k}$ to take values $\sqrt{r}u_{i}$
	for any $i\in J_{k}$ with equal probability $\frac{1}{r}$.
	For $k=n+1,\ldots n+K$, we set
\[
	\mathbb{P}\left(v_{k}=\sqrt{r}\varphi_{r\left(k-\left(n+1\right)\right)+1}\right)=\nicefrac{1}{r}\,,\ldots,\,\mathbb{P}\left(v_{k}=\sqrt{r}\varphi_{r\left(k-n\right)}\right)=\nicefrac{1}{r}
\]

Note that
\[
	\mathbb{E}\norm{v_{k}}^{2}\leq r\delta\qquad\forall k\in\left[n+K\right]
\]
 as well as 
\[
	\sum_{k=1}^{n+K}\mathbb{E}\left(v_{k}v_{k}^{*}\right)=\sum_{i\in\bigcup_{k=1}^{n}J_{k}}u_{i}u_{i}^{*}+\sum_{i=1}^{rK}\varphi_{i}\varphi_{i}^{*}=\mathbf {I}_{H}.
\]
It follows from Theorem \ref{MSS1} that
\[
	\mathbb{P}\left(\left\Vert \sum_{k=1}^{n+K}v_{k}v_{k}^{*}\right\Vert \leq\left(1+\sqrt{r\delta}\right)^{2}\right)>0.
\]
This implies the existence of a set $J\subset\left[M\right]$ satisfying \eqref{sel} such
that 
\[
	\left\Vert \sum_{i\in J}u_{i}u_{i}^{*}\right\Vert \leq\left(\frac{1}{\sqrt{r}}+\sqrt{\delta}\right)^{2}
\]
as required.
\end{proof}

In the case the vectors in our systems are short we can obtain a variant of Theorem \ref{t3.4}. Theorem \ref{t3.5} follows from Theorem \ref{MSS2} in a similar fashion as Theorem \ref{t3.4} follows from Theorem \ref{MSS1}, see \cite[Theorem 6.3]{bownik2016improved}.
 
\begin{thm}\label{t3.5}
	Let $M\in\NN$ and $\delta_{0}\in\left(0,\nicefrac{1}{4}\right)$.
	Suppose that $\left\{ u_{i}\right\} _{i=1}^{M}\subset\mathcal{H}$
	is a Bessel sequence with Bessel bound $1$ and $\norm{u_{i}}^{2}\leq\delta_{0}$
	for all $i$. Then for every collection of disjoint subsets $J_{1},\ldots,J_{n}\subset\left[M\right]$
	with $\#J_{k}=2$ for all $k$, there exists a subset $J\subset\left[M\right]$
	such that $\#\left(J\cap J_{k}\right)=1$ for all $k\in\left[n\right]$, 
	and the system of vectors $\left\{ u_{i}\right\} _{i\in J}$ is a
	Bessel sequence with bound $1-\eps_{0}$, where $\eps_{0}=\frac{1}{2}-\sqrt{2\delta_{0}\left(1-2\delta_{0}\right)}.$ \par
\end{thm}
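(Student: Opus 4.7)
The plan is to mimic the proof of Theorem \ref{t3.4} but invoke Theorem \ref{MSS2} in place of Theorem \ref{MSS1}, exploiting the restriction that each $J_k$ has exactly two elements so that the resulting random vectors have support of size $2$.

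First, I would apply Lemma \ref{l3.1} to the Bessel sequence $\{u_i\}_{i\in\bigcup_{k=1}^n J_k}$, whose norms all satisfy $\norm{u_i}^2\leq\delta_0$, to obtain vectors $\varphi_1,\ldots,\varphi_{2K}\in\HH$ with $\norm{\varphi_j}^2\leq\delta_0$ (choosing $K$ large enough so the added count is even) such that the union forms a Parseval frame for some finite-dimensional subspace $H\subset\HH$. Group the $\varphi_j$ into arbitrary pairs, obtaining a family of disjoint two-element index sets $J_{n+1},\ldots,J_{n+K}$ together with the original $J_1,\ldots,J_n$.

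Next, for each $k\in[n+K]$ define an independent random vector $v_k$ that takes each of the two values $\sqrt{2}\,w$ (where $w$ ranges over the two vectors indexed by $J_k$) with probability $1/2$; here $w$ is either some $u_i$ or some $\varphi_j$. A direct computation gives
\[
\sum_{k=1}^{n+K}\mathbb{E}[v_k v_k^*]=\sum_{i\in\bigcup_{k=1}^n J_k} u_i u_i^* + \sum_{j=1}^{2K}\varphi_j\varphi_j^* = \mathbf{I}_H,
\]
and $\mathbb{E}\norm{v_k}^2\leq 2\delta_0$ for all $k$. Since $\eps:=2\delta_0\in(0,\nicefrac{1}{2})$, Theorem \ref{MSS2} applies and yields a realization for which
\[
\Bigl\Vert \sum_{k=1}^{n+K} v_k v_k^* \Bigr\Vert \leq 1+2\sqrt{2\delta_0(1-2\delta_0)}.
\]
This realization corresponds to a selector $J\subset[M]$ with $\#(J\cap J_k)=1$ for $k\in[n]$, together with an auxiliary selection $I'\subset[2K]$.

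Finally, since $v_k v_k^* = 2 u_i u_i^*$ when $v_k=\sqrt{2}u_i$, and analogously for the $\varphi_j$, we can rewrite
\[
\sum_{k=1}^{n+K} v_k v_k^* = 2\sum_{i\in J} u_i u_i^* + 2\sum_{j\in I'} \varphi_j \varphi_j^*.
\]
Both summands on the right are positive semidefinite, so the operator norm of the first is bounded by that of the whole sum. Dividing by $2$ gives
\[
\Bigl\Vert \sum_{i\in J} u_i u_i^* \Bigr\Vert \leq \tfrac{1}{2}+\sqrt{2\delta_0(1-2\delta_0)}=1-\eps_0,
\]
which is precisely the Bessel bound claimed for $\{u_i\}_{i\in J}$. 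The only real point of care is the scaling factor $\sqrt{2}$ used to turn the probability-$\nicefrac12$ selection into an unbiased estimator of $\mathbf I_H$, and the observation that discarding the positive semidefinite $\varphi$-contribution does not increase the operator norm of the remaining sum; beyond these bookkeeping items, the argument is a direct transcription of the proof of Theorem \ref{t3.4}.
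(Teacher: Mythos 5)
Your proposal is correct and is exactly the argument the paper intends: the paper omits the details, remarking only that Theorem \ref{t3.5} follows from Theorem \ref{MSS2} in the same way that Theorem \ref{t3.4} follows from Theorem \ref{MSS1}, and your writeup supplies precisely those details (the $\sqrt{2}$ scaling, the check that $\eps=2\delta_0<\nicefrac12$, and the positive-semidefinite truncation) without error.
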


Throughout this section we fix some value of $\delta_{0} \in (0,\nicefrac{1}{4})$ and the corresponding $\eps_{0}=\frac{1}{2}-\sqrt{2\delta_{0}\left(1-2\delta_{0}\right)}$.
As a corollary we obtain a way to extract a syndetic Riesz sequence from a given Bessel sequence, provided that the vectors in the system are not too short.

\begin{cor}\label{c3.6}
	Let $M\in\NN$ and $\delta_{0}\in\left(0,\nicefrac{1}{4}\right)$.
	Suppose that $\left\{ u_{i}\right\} _{i=1}^{M}\subset\mathcal{H}$
	is a Bessel sequence with Bessel bound $B$ and $\norm{u_{i}}^{2}\geq B\left(1-\delta_{0}\right)$
	for all $i$. Then for every collection of disjoint subsets $J_{1},\ldots,J_{n}\subset\left[M\right]$
	with $\#J_{k}=2$ for all $k$, there exists a subset $J\subset\left[M\right]$
	such that $\#\left(J\cap J_{k}\right)=1$ for all $k\in\left[n\right]$
	and the system of vectors $\left\{ u_{i}\right\} _{i\in J}$ is a
	Riesz sequence with lower Riesz bound $B\eps_{0}$.
\end{cor}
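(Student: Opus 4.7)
The plan is to derive Corollary \ref{c3.6} from Theorem \ref{t3.5} by invoking the Naimark complement construction underlying Lemma \ref{l3.3}. The hypothesis $\norm{u_i}^2\geq B(1-\delta_0)$ says that the original vectors are long relative to the Bessel bound, while Theorem \ref{t3.5} applies only to short vectors. The Naimark complement trades long for short and Bessel for Riesz, so it is exactly the bridge we need.

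First I would normalize, replacing $u_i$ by $u_i/\sqrt{B}$, so that we may assume $B=1$ and $\norm{u_i}^2 \ge 1-\delta_0$. By the case $\norm{u_i}^2\geq\delta$ of Lemma \ref{l3.1} (with $\delta=1-\delta_0$), for every sufficiently large $K$ I can adjoin vectors $\varphi_1,\ldots,\varphi_K$ with $\norm{\varphi_j}^2 \geq 1-\delta_0$ so that $\{u_i\}_{i=1}^M\cup\{\varphi_j\}_{j=1}^K$ is a Parseval frame for a finite-dimensional $H\subset\HH$. Naimark's dilation theorem then produces a larger Hilbert space $\widetilde{\HH}\supset H$, an orthonormal basis $\{e_i\}_{i=1}^M\cup\{f_j\}_{j=1}^K$ of $\widetilde\HH$, and the orthogonal projection $P\colon\widetilde{\HH}\to H$ with $Pe_i=u_i$ and $Pf_j=\varphi_j$.

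Now I examine the complementary projection $Q=\mathbf I_{\widetilde\HH}-P$. Since $\{e_i,f_j\}$ is orthonormal and $P$ is an orthogonal projection,
\[
\norm{Qe_i}^2 = 1-\norm{Pe_i}^2 = 1-\norm{u_i}^2 \leq \delta_0,
\]
and the system $\{Qe_i\}_{i=1}^M$ is a Bessel sequence with bound $1$ (it is a subfamily of the Parseval frame $\{Qe_i\}\cup\{Qf_j\}$ for $Q(\widetilde\HH)$). Thus $\{Qe_i\}_{i=1}^M$ satisfies the hypotheses of Theorem \ref{t3.5}, and for the given collection $J_1,\dots,J_n$ of pairs I obtain a selector $J\subset[M]$ with $\#(J\cap J_k)=1$ for every $k$ such that $\{Qe_i\}_{i\in J}$ is Bessel with bound $1-\eps_0$.

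The final step is the duality: applying Lemma \ref{l3.3} with $P$ replaced by $Q$ (equivalently, with the roles of the two complementary projections interchanged) and $\delta=\eps_0$, the conclusion that $\{Qe_i\}_{i\in J}$ is Bessel with bound $1-\eps_0$ is equivalent to $\{Pe_i\}_{i\in J}=\{u_i\}_{i\in J}$ being a Riesz sequence with lower Riesz bound $\eps_0$. Undoing the initial normalization by multiplying vectors by $\sqrt B$ multiplies the lower Riesz bound by $B$, giving $B\eps_0$ as required. The only nonroutine point is the direction in which Lemma \ref{l3.3} is applied; this is taken care of by the symmetry of its statement under swapping $P$ and $\mathbf I_{\HH}-P$.
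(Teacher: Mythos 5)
Your proposal is correct and follows essentially the same route as the paper's proof: complete to a Parseval frame via Lemma \ref{l3.1}, pass to the Naimark complement where the vectors become short, apply Theorem \ref{t3.5} there, and translate the Bessel bound back into a lower Riesz bound via Lemma \ref{l3.3}. The only cosmetic differences are that you apply Lemma \ref{l3.1} to the whole system rather than to $\left\{ u_{i}\right\} _{i\in\bigcup_{k}J_{k}}$ and you do not bother pairing up the auxiliary vectors, neither of which affects the argument.
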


\begin{proof}	
	Without loss of generality we may assume $B=1$. Let $J_{1},\ldots,J_{n}\subset\left[M\right]$
	be a disjoint collection of subsets with $\#J_{k}=2$ for all $k$.
	Consider the system of vectors $\left\{ u_{i}\right\} _{i\in\bigcup_{k=1}^{n}J_{k}}$
	and note that it is Bessel sequence with bound $1$. Then applying
	Lemma \ref{l3.1} to $\left\{ u_{i}\right\} _{i\in\bigcup_{k=1}^{n}J_{k}}$,
	we find finitely many vectors $\varphi_{1},\ldots,\varphi_{2K}$ with
	$\norm{\varphi_{i}}^{2}\geq1-\delta_{0}$ for all $i$ and such that
	$\left\{ u_{i}\right\} _{i\in\bigcup_{k=1}^{n}J_{k}}\cup\left\{ \varphi_{i}\right\} _{i=1}^{2K}$
	is a Parseval frame for its linear span, denoted by $H$. \par

	For convenience we rename our vectors as $\left\{ g_{i}\right\} _{i=1}^{2\left(n+k\right)}$
	as follows. We write $J_{k}=\left\{ j_{1}^{k},j_{2}^{k}\right\} $
	and define 
	\[
		g_{2k-1}=u_{j_{1}^{k}},\quad g_{2k}=u_{j_{2}^{k}},\qquad k=1,\ldots,n,
	\]
	and $\left\{ g_{i}\right\} _{i=2n+1}^{2\left(n+K\right)}=\left\{ \varphi_{i}\right\} _{i=1}^{2K}$.
	By Naimark's dilation theorem there exists a $2\left(n+K\right)$
	dimensional space $\mathcal{K}\supseteq H$ with a corresponding orthonormal
	basis $\left\{ e_{i}\right\} _{i=1}^{2\left(n+K\right)}$ and an orthogonal
	projection $P:\mathcal{K}\rightarrow \mathcal{K}$ onto $H$ such that 
	\[
		Pe_{i}=g_{i}\qquad\text{for }i=1,2,\ldots,2\left(n+K\right)
	\]
	Then, setting $f_{i}=\left(\mathbf I_{\mathcal{K}}-P\right)e_{i}$ for $i=1,\ldots,2\left(n+K\right)$, 
	it follows that $\left\{ f_{i}\right\} $ is a Parseval frame
	for its linear span with $\norm{f_{i}}^{2}\leq\delta_{0}$. By Theorem
	\ref{t3.5} applied to the collection $J_{k}^{\prime}=\left\{ 2k-1,2k\right\} $,
	$k=1,\ldots,n+K$, there is a subset $J^{\prime}\subset\left[2\left(n+K\right)\right]$
	such that $\#\left(J^{\prime}\cap J_{k}^{\prime}\right)=1$ for all
	$k\in\left[n+K\right]$ and $\left\{ f_{i}\right\} _{i\in J^{\prime}}$
	is a Bessel sequence with bound $1-\eps_{0}$. By Lemma \ref{l3.3} $\left\{ g_{i}\right\} _{i\in J^{\prime}}$
	is a Riesz sequence with lower bound $\eps_{0}$. Hence, there exists
	$J\subset\left[M\right]$ such that $\#\left(J\cap J_{k}\right)=1$
	for all $k\in\left[n\right]$ and the system of vectors $\left\{ u_{i}\right\} _{i\in J}$
	is a Riesz sequence with lower bound $\eps_{0}$. 
\end{proof}

Combining Theorem \ref{t3.4} and Corollary \ref{c3.6} we get a finite dimensional version of Theorem \ref{A}.

\begin{thm}\label{t3.7}
	Let $\eps>0$ and $M\in\NN$. Suppose that
	$\left\{ u_{i}\right\} _{i=1}^{M}\subset\mathcal{H}$ is a Bessel
	sequence with Bessel bound $1$ and $\norm{u_{i}}^{2}\geq\eps$ for
	all $i$. Then there exists $r=O\left(\nicefrac{1}{\eps}\right)$,
	independent of $M,$ such that for every collection of disjoint subsets
	$J_{1},\ldots,J_{n}\subset\left[M\right]$ with $\#J_{k}\geq r$ for
	all $k$, there exists a subset $J\subset\left[M\right]$ such that
	$\#\left(J\cap J_{k}\right)=1$ for all $k\in\left[n\right]$ and
	 $\left\{ u_{i}\right\} _{i\in J}$ is a Riesz
	sequence with lower Riesz bound $\eps\eps_{0}$. Moreover, if $\eps>\nicefrac{3}{4}$ 
	then the same conclusion holds with $r=2$.
\end{thm}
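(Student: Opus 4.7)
My plan is to split the proof into two cases according to the size of $\eps$.

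For $\eps > 3/4$, I would apply Corollary \ref{c3.6} directly to $\{u_i\}_{i=1}^M$ with Bessel bound $B = 1$ and $\delta_0 := 1 - \eps \in (0,1/4)$. The hypothesis $\norm{u_i}^2 \geq \eps = B(1-\delta_0)$ is precisely our assumption, so the corollary yields, for any disjoint pairs $J_k \subset [M]$, a selector $J$ with $\#(J\cap J_k)=1$ such that $\{u_i\}_{i\in J}$ is a Riesz sequence with lower Riesz bound $\eps_0 \geq \eps\eps_0$. Hence $r=2$ suffices.

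For the general case $\eps \in (0, 3/4]$, the plan is to combine Theorem \ref{t3.4} and Corollary \ref{c3.6} in a two-stage selection. First, using Lemma \ref{l3.1} I would extend $\{u_i\}_{i \in \bigcup_k J_k}$ to a Parseval frame for some finite-dimensional space $H$, and then pass via Naimark's dilation theorem to the complement system $\{f_i := (\mathbf{I}-P)e_i\}$, whose vectors satisfy $\norm{f_i}^2 \leq 1-\eps$ and which forms a Parseval frame for $H^\perp$. Partitioning each $J_k$ into appropriately sized sub-groups and applying Theorem \ref{t3.4} to the short-vector system $\{f_i\}$ on this refined partition would produce an intermediate selection $J_1$ for which $\{f_i\}_{i \in J_1}$ is Bessel with bound bounded away from $1$; by Lemma \ref{l3.3}, the dual subsystem $\{u_i\}_{i\in J_1}$ is then a Riesz sequence with positive lower bound. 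A second pair-based stage using Corollary \ref{c3.6} would reduce the partial selection down to exactly one element per $J_k$, preserving a Riesz lower bound of order $\eps \eps_0$ via the $\eps_0$ factor coming from Theorem \ref{t3.5}.

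The main technical obstacle is achieving the announced size bound $r = O(1/\eps)$ with a \emph{universal} constant rather than the naive $r = O(1/\eps^2)$. Indeed, a single direct application of Theorem \ref{t3.4} to the Naimark complement produces the estimate $(1/\sqrt{r} + \sqrt{1-\eps})^2$, which only drops below $1 - \eps\eps_0$ once $r$ is of order $1/\eps^2$, because $\sqrt{1-\eps\eps_0} - \sqrt{1-\eps}$ behaves like $\eps(1-\eps_0)/2$ for small $\eps$. The sharper $O(1/\eps)$ bound therefore requires the two results to cooperate: Theorem \ref{t3.4} handles the gross selection from the short Naimark complement, while the support-$2$ refinement in Theorem \ref{t3.5} (underlying Corollary \ref{c3.6}) contributes the extra factor $\eps_0$ that compensates for the weakness of the one-shot argument, allowing the sub-group size in the first stage to be taken of order $1/\eps$ rather than $1/\eps^2$.
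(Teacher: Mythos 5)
Your treatment of the case $\eps>\nicefrac{3}{4}$ is exactly the paper's: apply Corollary \ref{c3.6} directly with $B=1$ and $\delta_{0}=1-\eps$. The general case, however, contains a genuine gap, and it sits precisely at the point you flag as the ``main technical obstacle.'' Your first stage applies Theorem \ref{t3.4} to the Naimark complement $\left\{ f_{i}\right\}$ with $\norm{f_{i}}^{2}\leq1-\eps$. With sub-groups of size $\tilde{r}=O\left(\nicefrac{1}{\eps}\right)$ this yields the Bessel bound $\left(\nicefrac{1}{\sqrt{\tilde{r}}}+\sqrt{1-\eps}\right)^{2}=\nicefrac{1}{\tilde{r}}+2\sqrt{(1-\eps)/\tilde{r}}+1-\eps$, which for small $\eps$ \emph{exceeds} $1$, since the cross term is of order $\sqrt{\eps}$ while the gain is only of order $\eps$; Lemma \ref{l3.3} then gives no lower Riesz bound at all for $\left\{ u_{i}\right\} _{i\in J_{1}}$, not merely a weak one. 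Moreover, even granting the first stage, the second cannot be run: Corollary \ref{c3.6} requires $\norm{u_{i}}^{2}\geq B\left(1-\delta_{0}\right)$ with $\delta_{0}<\nicefrac{1}{4}$, i.e.\ squared norms exceeding three quarters of the Bessel bound, and after your first stage the subsystem still only has Bessel bound $1$ while $\norm{u_{i}}^{2}$ may be as small as $\eps\leq\nicefrac{3}{4}$. The claimed ``cooperation'' in which the factor $\eps_{0}$ from Theorem \ref{t3.5} compensates for the one-shot weakness is not substantiated by any estimate; as written, neither stage closes.

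The paper's resolution is a different division of labor. Normalize so that $\norm{u_{i}}^{2}=\eps$ exactly and apply Theorem \ref{t3.4} \emph{directly to the vectors $u_{i}$}, viewed as short vectors with $\delta=\eps$. With sub-groups of size $\tilde{r}\geq\nicefrac{C}{\eps}$, where $C=9\left(\frac{1-\delta_{0}}{\delta_{0}}\right)^{2}$, this produces a selection whose upper Bessel bound is $\left(\nicefrac{1}{\sqrt{\tilde{r}}}+\sqrt{\eps}\right)^{2}\leq\frac{\eps}{1-\delta_{0}}$: the purpose of the first stage is not to obtain a lower Riesz bound but to shrink the upper Bessel bound until it is comparable to $\eps$. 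Only then are the hypotheses of Corollary \ref{c3.6} met, with $B=\left(\nicefrac{1}{\sqrt{\tilde{r}}}+\sqrt{\eps}\right)^{2}$ and $\norm{u_{i}}^{2}=\eps\geq B\left(1-\delta_{0}\right)$, and the pair-based stage --- which is where the Naimark complement, Lemma \ref{l3.3} and Theorem \ref{t3.5} actually enter --- delivers the selector with lower Riesz bound $B\eps_{0}\geq\eps\eps_{0}$. This ordering is what achieves $r=O\left(\nicefrac{1}{\eps}\right)$; your architecture, with the Naimark complement placed in the first stage, is locked into $r=O\left(\nicefrac{1}{\eps^{2}}\right)$.
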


\begin{proof}
	Without loss of generality we may assume $\norm{u_{i}}^{2}=\eps$
	for all $i$; otherwise replace $u_{i}$ with $\nicefrac{\sqrt{\eps}}{\norm{u_{i}}}u_{i}$.
	Let $C$ be an absolute constant to be specified later and
	set $r=2\left\lceil \frac{C}{\eps}\right\rceil $. Let $J_{1},\ldots,J_{n}\subset\left[M\right]$
	be a collection of disjoint subsets with $\#J_{k}\geq r$ for all
	$k$. \par

	Let $\tilde{r}=\frac{r}{2}$. For every $k\in\left[n\right]$ choose a
	partition $J_{k}=\Lambda_{2k-1}\cup\Lambda_{2k}$ such that $\#\Lambda_{\ell}\geq\tilde{r}$
	for all $\ell\in \left[2n\right]$. By Theorem \ref{t3.4} we may find a subset $J\subset\left[2n\right]$
	such that $\#\left(J\cap\Lambda_{\ell}\right)=1$ for all $\ell\in\left[2n\right]$, 
	and the system of vectors $\left\{ u_{i}\right\} _{i\in J}$ is a
	Bessel sequence with bound
	\[
		\left(\frac{1}{\sqrt{\tilde{r}}}+\sqrt{\eps}\right)^{2}.
	\]
	Then we partition $J=\bigcup_{m=1}^{n}P_{m}$ into pairs $P_{m}=\left\{ j_{2m-1},j_{2m}\right\} $
	so that $j_{\ell}\in J\cap \Lambda_{\ell}$ for all $\ell\in \left[2n\right]$. Next we wish
	to apply Corollary \ref{c3.6} to $\left\{ u_{i}\right\} _{i\in J}$, and we
	may do that if for some $\delta_{0}\in\left(0,\nicefrac{1}{4}\right)$
	we have 
	\[
		\eps\geq\left(\frac{1}{\sqrt{\tilde{r}}}+\sqrt{\eps}\right)^{2}\left(1-\delta_{0}\right).
	\]
	By a simple calculation this holds true whenever $\tilde{r}\geq\frac{C}{\eps}$,
	with $C=9\left(\frac{1-\delta_{0}}{\delta_{0}}\right)^{2}$, see the proof \cite[Theorem 6.7]{bownik2016improved}. By Corollary 	\ref{c3.6} applied to the collection $P_{1},\ldots,P_{n}$,
	there exists a subset $J^{\prime}\subset\left[n\right]$ such that
	$\#\left(J^{\prime}\cap P_{m}\right)=1$ for all $m\in\left[n\right]$, 
	and the system of vectors $\left\{ u_{i}\right\} _{i\in J^{\prime}}$
	is a Riesz sequence with lower bound $\left(\frac{1}{\sqrt{\tilde{r}}}+\sqrt{\eps}\right)^{2}\eps_{0}\geq\eps\eps_{0}$.
	For the moreover part we apply Corollary \ref{c3.6} directly, setting
	$\delta_{0}=1-\eps$.
\end{proof}

In order to prove Theorem \ref{A} we will require the following lemma.  

\begin{lem}\label{l3.8}
	Let $\left\{ J_{k}\right\} _{k}$ be a collection of disjoint
	subsets of $I$. Assume for every $n\in\NN$ we have a subset $I_{n}\subset\bigcup_{k=1}^{n}J_{k}$
	such that 
	\begin{equation}
		\#\left(I_{n}\cap J_{k}\right)=1\quad \forall k\in\left[n\right].	\label{eq:4}
	\end{equation}
	Then, there exists a subset $I_{\infty}\subset I$ and an increasing
	sequence $\left\{ n_{j}\right\} $ such that 
	\begin{equation}
		I_{n_{j}}\cap\left(\bigcup_{k=1}^{j}J_{k}\right)=I_{\infty}\cap\left(\bigcup_{k=1}^{j}J_{k}\right)\quad \forall j.\label{eq:5}
	\end{equation}
	In particular, we have
	\[
		\#\left(I_{\infty}\cap J_{k}\right)=1\hfill\qquad\forall k\in\NN.
	\]
\end{lem}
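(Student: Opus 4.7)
The plan is a standard Cantor-style diagonal extraction. The only ingredient needed beyond bookkeeping is the ability to apply the pigeonhole principle at each stage, which requires each $J_k$ to be finite. This is the case in every intended application of the lemma: in Theorem \ref{A} each block $J_k$ has a prescribed size $\geq r = \lceil C/\eps \rceil$, and after trimming we may assume $\#J_k = r$, a finite number.

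\medskip

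\noindent \textbf{Step 1 (iterated pigeonhole).} I will inductively construct nested infinite subsets $\NN \supset N_1 \supset N_2 \supset \cdots$ together with elements $x_k \in J_k$ so that
\[
I_n \cap J_k = \{x_k\} \qquad \text{for every } n \in N_k.
\]
For $k=1$, the map $n\mapsto I_n \cap J_1$ sends $\NN$ into the finite set $J_1$ (viewed as singletons), so pigeonhole produces an infinite $N_1$ and some $x_1\in J_1$ with the desired property. Given $N_{k-1}$ infinite, restrict to those $n\in N_{k-1}$ with $n\geq k$ (still an infinite set), apply pigeonhole again to $n\mapsto I_n\cap J_k\in J_k$, and extract $N_k\subset N_{k-1}$ and $x_k\in J_k$.

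\medskip

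\noindent \textbf{Step 2 (diagonalize and identify $I_\infty$).} I will then let $n_j$ be the smallest element of $N_j$ exceeding $n_{j-1}$, which exists because $N_j$ is infinite. This gives a strictly increasing sequence satisfying $n_j\in N_k$ for all $k\le j$ by nesting. Define
\[
I_\infty := \{x_k : k\in \NN\}.
\]
Since the $J_k$ are disjoint and each $x_k \in J_k$, we have $I_\infty \cap J_k = \{x_k\}$ for every $k$, which already gives the last conclusion of the lemma. For the stabilization statement, fix $j$; then for each $k\leq j$ the inclusion $n_j\in N_k$ forces $I_{n_j}\cap J_k = \{x_k\} = I_\infty \cap J_k$. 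Taking unions over $k=1,\ldots,j$ and invoking disjointness of the $J_k$ once more yields \eqref{eq:5}.

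\medskip

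\noindent The only conceivable obstacle is the finiteness of the $J_k$'s, which the lemma does not state explicitly but which is both used in the proof and satisfied in every application in this paper. Should one wish to remove the finiteness hypothesis, the same conclusion can be salvaged by replacing iterated pigeonhole with sequential compactness in the countable product $\prod_k (J_k\cup\{*\})$ equipped with the product of one-point compactifications of the discrete topologies, but no such generalization is needed here.
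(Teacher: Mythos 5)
Your argument is correct and is exactly the proof the paper has in mind: the paper omits the details, saying only that the lemma ``is proved by a combination of a diagonal argument with the pigeonhole principle,'' which is precisely your iterated-pigeonhole-plus-diagonalization scheme, and you are right that finiteness of each $J_k$ is implicitly assumed and is satisfied in every application. The one flaw is your closing aside: the finiteness hypothesis cannot actually be dropped --- if $J_1=\{a_1,a_2,\dots\}$ is infinite and $I_n\cap J_1=\{a_n\}$ with the $a_n$ distinct, then no subsequence stabilizes (in the one-point compactification the limit is the point at infinity, which would force $I_\infty\cap J_1=\emptyset$), so the proposed compactness salvage fails and the lemma is simply false without finiteness.
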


Lemma \ref{l3.8} is proved by a combination of a diagonal argument
with the pigeonhole principle. For a similar result involving partitions, see \cite[Proposition 2.1]{casazza2005frames}.

\begin{proof}[Proof of Theorem \ref{A}] Let $r=r\left(\eps\right)$
be as in Theorem \ref{t3.7} and $\left\{ J_{k}\right\} _{k}$ a collection
of disjoint subsets of $I$ with $\#J_{k}\geq r$, for all $k$. \par

For every $n\in\NN$ apply Theorem \ref{t3.7} to the collection
$J_{1},\ldots,J_{n}$. This gives a subset $I_{n}\subset\bigcup_{k=1}^{n}J_{k}$
satisfying $\left(\ref{eq:4}\right)$ and such that the system of vectors $\left\{ u_{i}\right\} _{i\in I_{n}}$
is a Riesz sequence in $\mathcal{H}$ with lower bound $\eps\eps_{0}$.
By Lemma \ref{l3.8} there exist an increasing sequence $\left\{ n_{j}\right\} $
and a subset $I_{\infty}\subset I$ satisfying $\left(\ref{eq:5}\right)$.
We claim that $\left\{ u_{i}\right\} _{i\in I_{\infty}}$ is a Riesz sequence
in $\mathcal{H}$ with the same lower bound. This is true since every
finite subsystem is contained in $\left\{ u_{i}\right\} _{i\in I_{n_{j}}}$,
given that $j$ is large enough.
\end{proof}

It is worth to mention an extension of Theorem \ref{A} in which we are interested in finding a syndetic Riesz sequence with tight bounds. This problem was known as the $R_{\eps}$ conjecture.

\begin{thm}\label{t3.9}
There exists a universal constant $C>0$ such that following holds.	Let $\left\{ u_{i}\right\} _{i\in I}$ be a unit norm Bessel sequence in $\HH$
	with bound $B$. For any $\eps>0$ and any collection of disjoint subsets of $I$, $\left\{ J_{k}\right\} _{k}$ satisfying $\#J_{k}\geq r=\left\lceil C\frac{B}{\eps^{4}}\right\rceil $
	for all $k$, there exists a selector $J\subset\bigcup_{k}J_{k}$ satisfying 
	\[
		\#\left(J\cap J_{k}\right)=1\qquad\forall k
	\]
	and such that $\left\{ u_{i}\right\} _{i\in J}$ is a Riesz sequence
	in $\mathcal{H}$ with bounds $1-\eps$ and $1+\eps$.
\end{thm}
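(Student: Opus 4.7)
The plan is to apply the MSS-based selector from Theorem \ref{t3.4} twice: once to produce a tight upper Bessel bound on the selected family, and once on its Naimark complement to extract a tight Riesz lower bound. A diagonal extraction via Lemma \ref{l3.8}, as in the proof of Theorem \ref{A}, reduces the statement to finitely many groups $J_1,\ldots,J_N$. I would then rescale by setting $v_i = u_i/\sqrt{B}$, so that $\{v_i\}$ is Bessel with bound $1$ and $\norm{v_i}^2 = 1/B$.

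In the first stage I would split each $J_k$ into $n=\lceil c_1/\eps^2\rceil$ disjoint subgroups of size at least $r/n$ and apply Theorem \ref{t3.4} to $\{v_i\}$ with this refined collection. This yields a first selector $J'$ containing exactly $n$ elements from each $J_k$ such that $\{v_i\}_{J'}$ is Bessel with bound $(1/\sqrt{r/n}+1/\sqrt{B})^2$. Scaling back to $\{u_i\}$, this gives Bessel bound $B'\leq 1+\eps/2$ on $\{u_i\}_{J'}$ provided $r/n\geq c_2 B/\eps^2$, which secures the upper Riesz bound.

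In the second stage, which follows the template of Corollary \ref{c3.6}, I would normalize again by setting $\tilde v_i = u_i/\sqrt{B'}$, so that $\{\tilde v_i\}$ is Bessel with bound $1$ and $\norm{\tilde v_i}^2 = 1/B' \geq 1-\eps/2$. Completing by Lemma \ref{l3.1} to a Parseval frame in a subspace $H$ (whose padding vectors also have norms bounded below by $1-\eps/2$), and Naimark-dilating to an orthonormal basis $\{e_i\}$ of an ambient Hilbert space $\mathcal{K}\supseteq H$ with projection $P$ onto $H$, the Naimark complements $\tilde f_i = (\mathbf{I}_{\mathcal{K}}-P)e_i$ satisfy $\norm{\tilde f_i}^2 = 1-1/B' \leq \eps/2$, and so do the complements of the padding vectors. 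A second application of Theorem \ref{t3.4} to this short-norm Bessel sequence, using the $n$-element groups $J'\cap J_k$ together with dummy groups for the padding complements, produces a final selector $J\subset J'$ with exactly one element per $J_k$ such that $\{\tilde f_i\}_J$ is Bessel with bound at most $(1/\sqrt{n}+\sqrt{\eps/2})^2$. With $n$ of order $1/\eps^2$ this bound is at most $1-(1-\eps/2)/B'$, and Lemma \ref{l3.3} converts it into a Riesz lower bound of at least $(1-\eps/2)/B' \geq 1-\eps$ for $\{u_i\}_J$. Combined with the upper bound from the first stage, $\{u_i\}_J$ is Riesz with bounds $1\pm\eps$.

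The main obstacle is balancing the parameters between the two stages: the first MSS application requires the subgroup size $r/n$ of order $B/\eps^2$ to bring the Bessel bound of $\{u_i\}_{J'}$ down to $1+\eps/2$, while the second application requires $n$ itself of order $1/\eps^2$ to shrink the Naimark-complement Bessel bound enough to deliver a Riesz lower bound close to $1$. Multiplying these constraints yields the total size $r=n\cdot(r/n) = O(B/\eps^4)$, matching the stated bound. Passage from the finite to the infinite case is then handled via Lemma \ref{l3.8}, exactly as in the proof of Theorem \ref{A}.
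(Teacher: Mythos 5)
Your proposal is correct in its overall architecture but takes a genuinely different route from the paper in the decisive last step. The paper's outline has three stages: it first invokes Theorem \ref{t3.7} to select a subsystem that is already a Riesz sequence with crude bounds $\eps_{0}$ and $\frac{1}{1-\delta_{0}}$, then applies Theorem \ref{t3.4} to push the upper Riesz bound down to $1+\eps$, and finally passes to the biorthogonal dual system, using Lemma \ref{l3.10}: reducing the \emph{upper} Riesz bound of the dual to $1+\eps$ by one more application of Theorem \ref{t3.4} raises the \emph{lower} Riesz bound of the original system to $\frac{1}{1+\eps}\geq1-\eps$. You instead dispense with the preliminary step and with the dual basis altogether, and extract the lower bound from the Naimark complement via Lemma \ref{l3.3}, exactly as in Corollary \ref{c3.6} but with quantitative parameters: once stage one forces the Bessel bound of the unit-norm subfamily down to $B'\leq1+\eps/2$, the complement vectors automatically have norm squared $1-\frac{1}{B'}\leq\eps/2$, so a second application of Theorem \ref{t3.4} to the complement yields a lower Riesz bound close to $1$. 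This is arguably cleaner: it needs no a priori Riesz lower bound, avoids Lemma \ref{l3.10}, and sidesteps the question of how large the dual vectors' norms are relative to the dual's Bessel bound, which the paper's outline glosses over. The reduction to the finite case via Lemma \ref{l3.8} and the bookkeeping $r=n\cdot(r/n)=O(B/\eps^{4})$ match the statement.

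One quantitative step must be tightened. You bound $\norm{\tilde f_{i}}^{2}$ by $\eps/2$ and then claim $\left(\frac{1}{\sqrt{n}}+\sqrt{\eps/2}\right)^{2}\leq1-\frac{1-\eps/2}{B'}$. The right-hand side equals $\left(1-\frac{1}{B'}\right)+\frac{\eps/2}{B'}$ and tends to $\eps/2$ as $B'\to1$, while the left-hand side always exceeds $\eps/2$, so the inequality fails when $B'$ is close to $1$. The repair is to feed Theorem \ref{t3.4} the exact common norm $\delta_{1}=1-\frac{1}{B'}$ of the complement vectors rather than the crude bound $\eps/2$: the required inequality becomes $\frac{1}{n}+2\sqrt{\delta_{1}/n}\leq\frac{\eps/2}{B'}$, which holds already for $n\gtrsim1/\eps$ since $\delta_{1}\leq\eps/2$ and $B'\leq\frac{3}{2}$. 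This fixes the step and in fact shows your scheme needs only $n=O(1/\eps)$ rather than $O(1/\eps^{2})$, giving $r=O(B/\eps^{3})$, slightly better than the stated bound.
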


\begin{rem*}
	Theorem \ref{t3.9}, when applied to exponential systems, provides an improvement of \cite[Theorem 6.4]{casazza2016consequences}. A recent multi-paving result of Ravichandran and Srivastava \cite{RS} suggests that this result might hold with $r=O(\frac{B}{\eps^{2}})$.
\end{rem*}

We will give an outline of the proof of Theorem \ref{t3.9}. This will require the following lemma, which appeared in \cite{MR1836633}. 
A slightly more general formulation, given below, is taken from \cite[Lemma 6.13]{bownik2016improved}.

\begin{lem}[\cite{bownik2016improved}] \label{l3.10}
	Suppose $\{u_{i}\}_{i\in I}$ is a Riesz basis in $\mathcal{H}$, and let $\{u^{*}_{i}\}_{i\in I}$ 
	be its unique biorthogonal Riesz basis. Then for any subset $J\subset I$, the Riesz sequence
	bounds of $\{u_{i}\}_{i\in J}$ are A and B if and only if the Riesz sequence bounds of
	$\{u^{*}_{i}\}_{i\in J}$ are $\nicefrac{1}{B}$ and $\nicefrac{1}{A}$. 
\end{lem}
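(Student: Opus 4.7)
The plan is to convert each Riesz sequence bound into a spectral bound on a Gram operator and then exploit the exact duality between $\{u_i\}$ and $\{u_i^*\}$ at the level of synthesis operators. Recall that for any $K\subset I$, the Riesz bounds $[A,B]$ of a system $\{v_i\}_{i\in K}$ are equivalent to the operator inequality $A\, \mathbf I \le G_K \le B\, \mathbf I$ on $\ell^2(K)$, where $G_K = (\langle v_i,v_j\rangle)_{i,j\in K}$ is the Gram operator. Hence the claim reduces to showing $A\,\mathbf I \le G_J \le B\,\mathbf I$ on $\ell^2(J)$ if and only if $\mathbf I/B \le G_J^* \le \mathbf I/A$ on $\ell^2(J)$, where $G_J^*$ denotes the Gram operator of $\{u_i^*\}_{i\in J}$.

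Let $T:\ell^2(I)\to\HH$ be the synthesis operator of the full Riesz basis, $Te_i = u_i$. Since $\{u_i\}$ is a Riesz basis for $\HH$, $T$ is a bounded invertible operator, and the biorthogonality relations $\langle u_i, u_j^*\rangle = \delta_{ij}$ are equivalent to the synthesis operator of $\{u_i^*\}$ being $(T^*)^{-1}$. Consequently the Gram operators of the two full biorthogonal systems are $T^*T$ and $(T^*T)^{-1}$, each the inverse of the other, so the case $J=I$ of the lemma is immediate from the fact that spectral bounds transform by $x\mapsto 1/x$ under operator inversion.

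To descend to an arbitrary $J\subset I$, I would invoke a Naimark-complement argument in the same spirit as Lemma~\ref{l3.3}. After rescaling $\{u_i\}_{i\in J}$ to a Parseval frame on its closed linear span, one dilates to an ambient Hilbert space with orthonormal basis $\{e_i\}$ so that $\{u_i\}_{i\in J}$ is realized as $\{Pe_i\}_{i\in J}$ for an orthogonal projection $P$. Biorthogonality together with the uniqueness of the biorthogonal dual then forces the correspondingly rescaled $\{u_i^*\}_{i\in J}$ to coincide with the Naimark complement $\{(\mathbf I - P)e_i\}_{i\in J}$ up to the identification coming from $T$. Lemma~\ref{l3.3} exchanges the upper Bessel bound of $\{Pe_i\}_{i\in J}$ with the lower Riesz bound of the complementary system, and applying it at both endpoints of the spectrum and undoing the rescaling converts $[A,B]$ into $[1/B, 1/A]$.

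The main obstacle is precisely this last identification. For a generic positive operator $G$ and projection $P_J$, the compressions $P_J G P_J$ and $P_J G^{-1} P_J$ are linked by a Schur complement rather than by direct inversion, so the desired spectral duality is a priori not true. The content of the lemma is that the biorthogonal structure of the full Riesz basis rules out this Schur-complement obstruction, and the key technical step is to verify that the two Naimark dilations, one built from $\{u_i\}_{i\in J}$ and the other from $\{u_i^*\}_{i\in J}$, arise from the same projection up to a unitary equivalence induced by $T$.
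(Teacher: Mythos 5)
The paper offers no proof of this lemma: it is quoted from Young's book and from \cite[Lemma 6.13]{bownik2016improved}, so there is no in-house argument to compare your approach against. Judged on its own, your proposal is fine through the case $J=I$ (the Gram operators of a Riesz basis and of its biorthogonal basis are $T^{*}T$ and $(T^{*}T)^{-1}$, so the spectral bounds invert), but the descent to a proper subset $J$ --- which you yourself flag as ``the key technical step'' --- is a genuine gap, and the mechanism you propose for closing it cannot work. Naimark complementation, as in Lemma \ref{l3.3}, sends the Gram operator $G$ of a Parseval frame to $\mathbf I-G$; it realizes the spectral map $x\mapsto 1-x$, not $x\mapsto 1/x$. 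The Gram operator of the restricted dual $\{u_i^{*}\}_{i\in J}$ is the compression $(G^{-1})_{JJ}$ of the inverse, which in general equals neither $\mathbf I-G_{JJ}$ nor $(G_{JJ})^{-1}$, so the restricted dual cannot be identified with the Naimark complement of a rescaling of $\{u_i\}_{i\in J}$.

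More seriously, the Schur-complement obstruction you mention is \emph{not} removed by biorthogonality: one has $\bigl((G^{-1})_{JJ}\bigr)^{-1}=G_{JJ}-G_{JJ^{c}}(G_{J^{c}J^{c}})^{-1}G_{J^{c}J}\le G_{JJ}$, and the inequality is strict in general. Concretely, in $\CC^{2}$ take $u_{1}=(1,0)$, $u_{2}=(1,1)$, so $u_{1}^{*}=(1,-1)$, and $J=\{1\}$: the optimal Riesz bounds of $\{u_{1}\}$ are $A=B=1$, while those of $\{u_{1}^{*}\}$ are both $2\neq 1/A$. So the two-sided statement with optimal bounds fails for proper subsets; what survives --- and is all that the outline of Theorem \ref{t3.9} actually uses --- is the pair of one-sided implications: if $\{u_i^{*}\}_{i\in J}$ has upper Riesz (equivalently Bessel) bound $B^{*}$, then $\{u_i\}_{i\in J}$ has lower Riesz bound $1/B^{*}$, and symmetrically with the roles of the two systems exchanged. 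That direction has a two-line proof needing no dilation: for a finite sum $f=\sum_{i\in J}a_iu_i$, biorthogonality gives $a_j=\langle f,u_j^{*}\rangle$ for $j\in J$, whence $\sum_{j\in J}|a_j|^{2}=\sum_{j\in J}|\langle f,u_j^{*}\rangle|^{2}\le B^{*}\norm{f}^{2}$. I would drop the dilation machinery, prove exactly this, and state explicitly which half of the ``if and only if'' is being invoked at each step.
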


The proof of Theorem \ref{t3.9} proceeds as follows. By Lemma \ref{l3.8} it suffices to consider a
finite system $\left\{ u_{i}\right\}^{M}_{i=1}$ that is a unit norm Bessel sequence with upper bound $B$.
Fix a collection $J_{1},\ldots,J_{n}\subset\left[M\right]$ with $\#J_{k}\geq r$ for
all $k$. Then the subsystem we are looking for is obtained in 3 steps. In the first step, using Theorem \ref{t3.7}, 
we find a subsystem $\left\{u_{i}\right\}_{i\in J}$ which is a Riesz sequence 
with lower Riesz bound $\eps_{0}$ and upper Riesz bound $\frac{1}{1-\delta_{0}}$,
and $\#\left(J\cap J_{k}\right) \geq \lceil \frac{C}{\eps^{2}}\rceil^2$ for all $k$.
In the second step we apply Theorem \ref{t3.4} to $\left\{u_{i}\right\}_{i\in J}$ and extract from it 
another subsystem $\{u_{i}\}_{i\in J^{\prime}}$, $J^{\prime}\subset J$, having its upper Riesz bound 
reduced to $1+\eps$. As a consequence we get $\#\left(J^{\prime} \cap J_{k}\right)  \geq \left\lceil \frac{C}{\eps^{2}}\right\rceil$
for all $k$.
In the last step we move to the dual system $\left\{u^{*}_{i}\right\}_{i\in J^{\prime}}$ 
which, by Lemma \ref{l3.10}, has upper Riesz bound $\frac{1}{\eps_{0}}$. Apply to it Theorem \ref{t3.4} and get a subsystem
$\{u^{*}_{i}\}_{i\in J^{\prime\prime}}$, $J^{\prime\prime}\subset J^{\prime}$ with upper 
Riesz bound reduced to $1+\eps$, and $\#\left(J^{\prime\prime}\cap J_{k}\right)=1$ for all $k$.
Now $\left\{u_{i}\right\}_{i\in J^{\prime\prime}}$ is the desired subsystem, 
keeping in mind that $\frac{1}{1+\eps}\geq 1-\eps$.

\section{Open sets} \label{S4}

In this section we prove Theorem \ref{B}, namely we present
a construction of a syndetic exponential Riesz sequence which is asymptotically sharp in both senses
described in Section \ref{S2}. This will be done applying a particular case
of Fourier quasicrystals. We begin with some background.

\subsection{Universal Riesz sequences}

In their paper \cite{MR2439002} Olevskii and Ulanovskii asked
whether there exists a set $\Lambda$ such that $E\left(\Lambda\right)$
is a Riesz sequence in $L^{2}\left(\SS\right)$ for all sets $\SS\subset\TT$
with large measure. As it turns out, the answer depends considerably on the spectrum $\SS$. It is positive if we restrict ourselves to open spectra.
\begin{thm}[{\cite[Theorem 5]{MR2439002}}] \label{t4.2}
	For every $d\in\left(0,1\right)$, there is a universal
	Riesz sequence, i.e. a set $\Lambda\subset\RR$ with $D\left(\Lambda\right)=d$
	such that $E\left(\Lambda\right)$ is a Riesz sequence in $L^{2}\left(\SS\right)$
	for every open set $\SS\subset\TT$ with $\frac{\left|\SS\right|}{2\pi}>d$.
\end{thm}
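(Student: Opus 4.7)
The plan is to construct $\Lambda$ explicitly as a one-dimensional \emph{simple quasicrystal} via a cut-and-project scheme, and then to establish the universal Riesz sequence property by combining the rigid Fourier-analytic structure of such sets with an exhaustion argument that crucially uses the openness of $\SS$.

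\textbf{Construction.} Fix an irrational $\alpha>0$ and a half-open interval $W\subset[0,1)$ of length $d$. Consider the lattice $L=\ZZ(1,\alpha)+\ZZ(0,1)\subset\RR^{2}$ together with the coordinate projections $\pi_{1},\pi_{2}$, and set
\[
\Lambda=\pi_{1}\bigl(L\cap(\RR\times(W+\ZZ))\bigr)=\{n\in\ZZ:\{n\alpha\}\in W\}.
\]
Then $\Lambda$ is uniformly discrete, and Weyl's equidistribution theorem gives that $\Lambda$ has uniform density $D(\Lambda)=d$.

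\textbf{Reduction to a norm inequality.} By the definition of a Riesz sequence, $E(\Lambda)$ is a Riesz sequence in $L^{2}(\SS)$ if and only if there exists a constant $c>0$ such that
\[
\|\mathbf{1}_{\SS}f\|_{L^{2}(\TT)}^{2}\geq c\,\|f\|_{L^{2}(\TT)}^{2}
\]
for every $f$ in the closed linear span of $\{e^{i\lambda x}\}_{\lambda\in\Lambda}$ inside $L^{2}(\TT)$. Since the left-hand side is monotone in $\SS$ and any open $\SS\subset\TT$ can be exhausted from inside by finite disjoint unions $K$ of closed intervals with $|K|\nearrow|\SS|$, it suffices to prove the inequality \emph{uniformly} for every finite union $K$ of closed intervals with $|K|/(2\pi)>d$, with a constant $c$ depending only on the measure excess $|K|/(2\pi)-d$.

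\textbf{Quasicrystal rigidity.} The engine of the proof is the Poisson-type summation formula satisfied by simple quasicrystals: the tempered distribution $\mu_{\Lambda}=\sum_{\lambda\in\Lambda}\delta_{\lambda}$ has a Fourier transform which is again a locally finite sum of point masses, supported on a dual quasicrystal $\Lambda^{*}$ read off from the same cut-and-project data. Using this identity, the norm inequality on $K$ is transferred into a sampling/interpolation inequality on a one-dimensional bandlimited model whose spectral measure is essentially $\mathbf{1}_{W}$, and the threshold $|W|=d$ is exactly the critical density at which this inequality becomes invertible.

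\textbf{Main obstacle.} The delicate step is the uniform lower bound on finite unions of possibly many \emph{short} intervals whose total measure only slightly exceeds $2\pi d$. Kahane's Theorem \ref{t1.3} is not available here, since individual intervals of $K$ may be far shorter than $2\pi d$ and the classical interval theory is thus insufficient. It is precisely the structural rigidity of the quasicrystal, expressed through the discrete support of $\widehat{\mu_{\Lambda}}$, that compensates for the possibly fragmented geometry of $K$; carrying out this transfer cleanly and controlling the resulting constants uniformly in the combinatorial structure of $K$ is the technical heart of the proof.
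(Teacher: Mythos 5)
This statement is quoted by the paper from Olevskii--Ulanovskii and is not proved here; the construction cited there is a specially chosen perturbation of the lattice $\frac{1}{d}\ZZ$, whereas you propose a simple quasicrystal $\Lambda(\alpha,W)$. That route is legitimate in principle --- it would in fact yield the stronger Matei--Meyer statement (Theorem \ref{t4.5}), with $\Lambda\subset\ZZ$ --- and your preliminary reductions are sound: equidistribution does give $D(\Lambda)=d$, and since $\SS$ is open one may choose a single finite union of closed intervals $K\subset\SS$ with $|K|/(2\pi)>d$ and prove the lower bound there (you do not even need the uniformity in $K$ that you ask for, since the theorem fixes $\SS$ and allows the Riesz constant to depend on it).

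The genuine gap is in the step you label ``Quasicrystal rigidity,'' which is precisely where the entire content of the theorem lives and which you yourself flag as the ``main obstacle'' without carrying it out. The assertion that the discreteness of $\widehat{\mu_{\Lambda}}$ lets you ``transfer the norm inequality into a sampling/interpolation inequality on a bandlimited model'' with critical threshold $|W|=d$ is not an argument; no mechanism is given for why the Riesz inequality survives on a multiband set $K$ made of many short intervals, and indeed Poisson-type summation alone does not produce lower bounds. The known way to close this is Meyer's duality principle (Lemma \ref{l4.4} in the paper, after Kozma--Lev): the Riesz sequence property of $E(\Lambda(\alpha,W))$ in $L^{2}(K)$ is equivalent, after swapping the roles of the physical and internal spaces, to a \emph{frame} property of $E(\Lambda(\alpha,K'))$ on the single window interval $W$, where the dual quasicrystal has uniform density $|K|/(2\pi)>|W|$; Beurling's sufficient density condition for frames on an interval then applies. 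Without this duality step (or an equivalent substitute), your outline does not constitute a proof --- it reduces the theorem to a claim that is essentially the theorem itself.
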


Note that the set $\Lambda$ constructed in the proof of
Theorem 4.1 is a specially chosen perturbation of the lattice $\nicefrac{1}{d}\ZZ$. 
On the other hand, considering all measurable sets the answer is negative.

\begin{thm}[{\cite[Theorem 4]{MR2439002}}] \label{t4.1}
	Let $d\in\left(0,1\right)$. Then for every $\eps\in\left(0,1\right)$
	and $\Lambda\subset\RR$ with $D\left(\Lambda\right)=d$, there exists
	a set $\SS\subset\TT$ with $\frac{\left|\SS\right|}{2\pi}>1-\eps$
	such that $E\left(\Lambda\right)$ is not a Riesz sequence in $L^{2}\left(\SS\right)$.
\end{thm}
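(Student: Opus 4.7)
The theorem splits into an easy case and a substantive one. If $d>1-\eps$, the conclusion follows directly from Landau's theorem (Theorem~\ref{t1.4}): any $\SS\subset\TT$ with $1-\eps<|\SS|/(2\pi)<d$ satisfies $D^{+}(\Lambda)=d>|\SS|/(2\pi)$, so $E(\Lambda)$ cannot be a Riesz sequence in $L^{2}(\SS)$. Henceforth I would assume $d\leq 1-\eps$, the regime where Landau is silent.

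The plan is to produce a set $\SS\subset\TT$ with $|\SS|/(2\pi)>1-\eps$ together with a sequence of trigonometric polynomials $p_{n}(x)=\sum_{\lambda\in F_{n}}a_{\lambda}^{(n)}e^{i\lambda x}$, with $F_{n}\subset\Lambda$ finite and $\sum_{\lambda}|a_{\lambda}^{(n)}|^{2}=1$, such that $\|p_{n}\|_{L^{2}(\SS)}\to 0$. Since $\Lambda$ is separated, $\|p_{n}\|_{L^{2}(\TT)}$ stays bounded below, so such a sequence immediately witnesses the failure of the lower Riesz bound of $E(\Lambda)$ in $L^{2}(\SS)$. Equivalently, I need polynomials from $\spa E(\Lambda)$ whose $L^{2}(\TT)$-mass concentrates arbitrarily strongly on the complement $E:=\TT\setminus\SS$, a set of measure less than $2\pi\eps$.

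To construct such polynomials I would set up a natural eigenvalue problem. For $N\in\NN$ let $V_{N}=\spa\{e^{i\lambda x}:\lambda\in\Lambda\cap[-N,N]\}$, of dimension approximately $2dN$, and let $\mu_{N}(E)$ denote the smallest singular value of the restriction map $V_{N}\to L^{2}(\TT\setminus E)$. If $\mu_{N}(E)\to 0$ as $N\to\infty$ for some $E$ with $|E|/(2\pi)<\eps$, then choosing $p_{N}\in V_{N}$ a near-minimizer yields the desired sequence with $\SS=\TT\setminus E$. Two natural routes to such an $E$ suggest themselves: (a) fix $E$ \emph{a priori}, say as a small interval, and seek $V_{N}$-approximants to sharply peaked kernels (Fej\'er-type polynomials scaled to the density of $\Lambda$); or (b) let the concentration set $E_{N}$ vary with $N$, then use weak-$*$ compactness of the probability measures $|p_{N}|^{2}\,dx/\|p_{N}\|_{L^{2}(\TT)}^{2}$ to extract a subsequential limit concentrated on a fixed $E$, verifying that its support has measure at most $2\pi\eps$.

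The principal obstacle is \emph{crossing the Landau threshold}. A naive dimension count -- comparing $\dim V_{N}\approx 2dN$ to the effective dimension of bandlimited $L^{2}(\TT\setminus E)$, which is $\approx 2N(1-|E|/(2\pi))$ -- furnishes a nonzero element of $V_{N}$ that \emph{exactly} vanishes on $\TT\setminus E$ only when $|E|/(2\pi)>1-d$, precisely the Landau range already handled. In the genuinely new regime $|E|/(2\pi)<1-d$, no element of $V_{N}$ vanishes outside $E$, so one must drive $\mu_{N}(E)\to 0$ via \emph{approximate} concentration. Accomplishing this requires substantive use of the full \emph{uniform} density hypothesis $D(\Lambda)=d$ (as opposed to merely the upper density inequality used in Landau's argument), likely through translation-averaging -- exploiting the asymptotic translation-invariance of the local statistics of $\Lambda$ -- or by locating approximate arithmetic patterns within $\Lambda$ on which Dirichlet-like kernels deliver the needed concentration. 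This quantitative leap beyond the Landau regime is, in my view, the technical heart of the proof.
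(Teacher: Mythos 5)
Your reduction of the case $d>1-\eps$ to Landau's theorem (Theorem \ref{t1.4}) is correct, but that case is a trivial sliver; for the substantive regime $d\le 1-\eps$ you offer only a programme, and both routes you propose run into concrete obstructions. Route (a) --- fixing the concentration set $E$ in advance as a small interval, or any finite union of intervals, so that $\SS=\TT\setminus E$ is essentially open --- cannot work for arbitrary $\Lambda$: Theorems \ref{t4.2} and \ref{t4.5} of this very paper exhibit, for every $d\in(0,1)$, sets $\Lambda$ of uniform density $d$ such that $E(\Lambda)$ \emph{is} a Riesz sequence in $L^{2}(\SS)$ for every open $\SS$ with $\left|\SS\right|/(2\pi)>d$. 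For such universal $\Lambda$ your quantity $\mu_{N}(E)$ stays bounded below for every multiband $E$ with $\left|E\right|/(2\pi)<1-d$, so the bad set must be non-open and chosen adaptively in terms of $\Lambda$; no a priori choice of $E$ can succeed. Route (b) has a different gap: weak-$*$ limits of the probability measures $|p_{N}|^{2}\,dx/\|p_{N}\|_{L^{2}(\TT)}^{2}$ need not be supported on a set of measure at most $2\pi\eps$ even if each individual measure is almost entirely concentrated on a set $E_{N}$ of measure less than $2\pi\eps$; such measures can perfectly well converge weak-$*$ to normalized Lebesgue measure on all of $\TT$, since the sets $E_{N}$ may wander and equidistribute. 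The step ``verifying that its support has measure at most $2\pi\eps$'' is exactly where the argument breaks, and nothing in the proposal supplies it.

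For the record, the paper does not prove Theorem \ref{t4.1} at all --- it is quoted from Olevskii and Ulanovskii \cite{MR2439002} --- so there is no internal proof to compare against. In the source, the bad set is constructed as the complement of a union of infinitely many intervals chosen along a sequence of scales: the uniform density hypothesis is used at each scale to produce a normalized exponential polynomial with spectrum in $\Lambda$ concentrated on a prescribed set of small measure, and the scales are combined so that the total measure removed stays below $2\pi\eps$ while the lower Riesz bound degenerates along the sequence. Your instinct that the difficulty lies in crossing the Landau threshold is sound, but as written the proposal identifies the obstacle without resolving it, so the main case remains unproved. (A minor additional point: if $\Lambda$ is not separated the theorem is immediate, since the lower Riesz inequality already fails for a pair of nearby frequencies; your appeal to separatedness to bound $\|p_{N}\|_{L^{2}(\TT)}$ from below should be replaced by this dichotomy together with Kahane's theorem, Theorem \ref{t1.3}.)
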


\subsection{Quasicrystals}

The mathematical model of quasicrystals, introduced by Meyer
in 1972 (see \cite{meyer1972algebraic}), is a construction based on the ``cut and project''
process which produces what is known as Meyer's ``model set''. In
the periodic case a model set takes the following form. We
use the notation from \cite{kozma2011exponential}.

\begin{defn}
	Let $\alpha$ be an irrational number, and $I=\left[a,b\right)\subset \left[0,1\right]$.
	The \emph{(simple) quasicrystal} corresponding to
	$\alpha$ and $I$ is 
	\[
		\Lambda\left(\alpha,I\right)=\left\{ n\in\ZZ\,|\,\left\{ \alpha n\right\} \in I\right\} 
	\]
	where $\left\{ x\right\} $ is the fractional part of the real number $x$.
\end{defn}

The set $\Lambda\left(\alpha,I\right)$ has many interesting
and important properties, see, for instance, \cite{meyer1972algebraic} and \cite{meyer2006nombres}. A basic property which follows from the fact that the sequence
$\left\{ \alpha n\right\} $ is equidistributed in $\left[0,1\right]$ is that the corresponding simple quasicrystal had uniform density $D\left(\Lambda\left(\alpha,I\right)\right)=\left|I\right|$.
One key property of the exponential system $E\left(\Lambda\left(\alpha,I\right)\right)$
that will be useful for us is universality. Matei and Meyer showed that simple quasicrystals are an alternative example of universal Riesz sequences with the additional property that they are contained in the integers rather than in the real line (see \cite{matei2008quasicrystals}-\cite{matei2010simple}). We will require a version of Meyer's duality principle, we use the formulation given in \cite[Lemma 2.1]{kozma2011exponential}.

\begin{lem}[\cite{kozma2011exponential}] \label{l4.4}
	Let $U\subset [0,1]$ be a semi-closed multiband set, and $V\subset [0,1]$ a multiband set. Then:
	\begin{enumerate}
		\item If $E\left(2\pi\Lambda\left(\alpha,V\right)\right)$ is a frame in $L^{2}\left(U\right)$, then $E\left(-2\pi\Lambda\left(\alpha,U\right)\right)$ is a Riesz sequence in $L^{2}\left(V\right)$.
		\item If $E\left(2\pi\Lambda\left(\alpha,V\right)\right)$ is a Riesz sequence in $L^{2}\left(U\right)$, then $E\left(-2\pi\Lambda\left(\alpha,U\right)\right)$ is a frame in $L^{2}\left(V\right)$.
	\end{enumerate}
\end{lem}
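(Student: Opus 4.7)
The plan is to combine general frame--Riesz duality with the cut-and-project structure of simple quasicrystals, which is the essence of Meyer's duality principle as formulated in \cite{kozma2011exponential}.

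First, I would invoke the standard abstract principle: for any Bessel system with synthesis operator $S$ and analysis operator $S^{*}$, the operators $SS^{*}$ and $S^{*}S$ share the same non-zero spectrum; the system is a frame with bounds $A,B$ iff $SS^{*}$ has spectrum contained in $[A,B]$, and it is a Riesz sequence with bounds $A,B$ iff $S^{*}S$ has spectrum contained in $[A,B]$. Thus a frame statement for one system is equivalent to a Riesz-sequence statement for any other system whose Gram operator $S^{*}S$ can be identified, via an explicit unitary, with $SS^{*}$ of the first.

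Second, to produce this identification I would use the cut-and-project picture. Consider the orbit $\{(n,\{\alpha n\}):n\in\ZZ\}\subset\ZZ\times\TT$; the quasicrystal $\Lambda(\alpha,W)$ is exactly the projection to the first coordinate of its intersection with $\ZZ\times W$. Expanding the characteristic function
\[
\mathbf{1}_{W}(\{\alpha n\})=\sum_{k\in\ZZ}\widehat{\mathbf 1}_{W}(k)\,e^{2\pi ik\alpha n}
\]
for $W\in\{U,V\}$ (with the series interpreted in an Abel-summable sense), inserting it as a selector into sums over $\ZZ$, and swapping the order of summation, one obtains an explicit isometry between the Gram operator of $E(2\pi\Lambda(\alpha,V))$ on $\ell^{2}(\Lambda(\alpha,V))$ and the Gram operator of $E(-2\pi\Lambda(\alpha,U))$ on $\ell^{2}(\Lambda(\alpha,U))$. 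The sign change $2\pi\mapsto -2\pi$ is the complex conjugation forced by this pairing. The two items of the lemma then follow at once from combining this isometry with the duality of the first step.

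The main obstacle is the analytic justification of the Gram identification, since $\widehat{\mathbf 1}_{W}(k)=O(1/|k|)$, so the Fourier series above is only conditionally convergent and the interchange of summation is not a priori legal. I would first establish the identity for finitely supported coefficient sequences against smoothed windows, where all sums converge absolutely, and then pass to the limit using the Parseval bound for $E(\ZZ)$ on $L^{2}(W)$ to control the tails uniformly. The semi-closed hypothesis on $U$ is needed precisely so that the Fourier series of $\mathbf 1_{U}$ returns the correct pointwise value at every point of the orbit $\{\alpha n\}$, including at the (at most countably many) points lying on the boundary of $U$; by irrationality of $\alpha$ no such boundary point is hit by more than one $n$, so this convention is consistent and preserves the isometry.
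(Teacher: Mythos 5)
You should first be aware that the paper does not prove this lemma at all: it is quoted verbatim from Kozma--Lev \cite[Lemma 2.1]{kozma2011exponential} (ultimately going back to Meyer and Matei--Meyer), so there is no in-paper proof to match your argument against; your proposal has to stand on its own.

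On its own it has a genuine gap, and it sits exactly where the whole content of the lemma lies. Your first step correctly observes that to convert a frame statement into a Riesz-sequence statement you must identify the Gram operator $S^{*}S$ of one system with the \emph{frame} operator $SS^{*}$ of the other. But your second step then claims to produce ``an explicit isometry between the Gram operator of $E(2\pi\Lambda(\alpha,V))$ on $\ell^{2}(\Lambda(\alpha,V))$ and the Gram operator of $E(-2\pi\Lambda(\alpha,U))$ on $\ell^{2}(\Lambda(\alpha,U))$.'' A Gram-to-Gram unitary equivalence would prove ``$E(2\pi\Lambda(\alpha,V))$ is a Riesz sequence in $L^{2}(U)$ iff $E(-2\pi\Lambda(\alpha,U))$ is a Riesz sequence in $L^{2}(V)$,'' which is neither item of the lemma; so as written the mechanism does not yield the statement, and it contradicts your own first step. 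Even if one charitably reads it as the intended Gram-to-frame-operator identification, that identification is never constructed: the Abel-summed insertion of $\mathbf 1_{W}(\{\alpha n\})=\sum_{k}\widehat{\mathbf 1}_{W}(k)e^{2\pi i k\alpha n}$ followed by an interchange of sums is a formal gesture, and the object it would have to produce --- a unitary $W:\ell^{2}(\Lambda(\alpha,U))\to L^{2}(U)$ carrying the form $\int_{V}\bigl|\sum_{\lambda}c_{\lambda}e^{-2\pi i\lambda t}\bigr|^{2}dt$ onto $\sum_{n\in\Lambda(\alpha,V)}|\widehat f(n)|^{2}$ --- almost certainly does not exist. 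Indeed, an exact equivalence would upgrade both items of the lemma to equivalences with equal bounds, whereas the lemma is deliberately stated as two one-sided implications with asymmetric hypotheses ($U$ semi-closed, $V$ merely multiband). The actual proofs of Meyer-type duality do something weaker and different: given a finite coefficient sequence $(c_{\lambda})_{\lambda\in\Lambda(\alpha,U)}$ they build from it a concrete family of test functions supported on $U$ (parametrized by a point of the torus $\TT^{2}$ on which the orbit $(n,\{n\alpha\})$ equidistributes), apply the assumed frame or Riesz inequality to those test functions, and compare the two resulting quadratic forms by a Fubini/equidistribution computation --- obtaining an \emph{inequality} between the forms, not an isometry. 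Your sketch never produces the test functions, so the one step that constitutes the proof is missing. The closing remarks about conditional convergence and about the semi-closed hypothesis are sensible as far as they go, but they are refinements of a core argument that has not been supplied.
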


Here, a set is called multiband if it is a union of finitely many intervals. A multiband set is called semi-closed if its indicator function is continuous either from the left or from the right. 
Combining Lemma \ref{l4.4} and Beurling's sufficient condition for frames \cite[Theorem 3.33]{olevskii2016functions} together with a rescaling argument gives the following universality result of Matei and Meyer \cite{matei2009variant}, see \cite[Section 6.6.5]{olevskii2016functions}.

\begin{thm}[\cite{matei2009variant}]\label{t4.5}
	The system $E\left(\Lambda\left(\alpha,I\right)\right)$
	is a universal Riesz sequence, i.e., $E\left(\Lambda\left(\alpha,I \right)\right)$
	is a Riesz sequence in $L^{2}\left(\SS\right)$ for every open set
	$\SS\subset\TT$ with $\frac{\left|\SS\right|}{2\pi}>\left|I\right|$.
\end{thm}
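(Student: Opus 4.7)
The plan is to derive Theorem \ref{t4.5} from Meyer's duality principle in Lemma \ref{l4.4} combined with Beurling's classical sufficient density condition for exponential frames over a bounded set, using the openness of $\SS$ to carry out an inner approximation by a semi-closed multiband set.

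First I would rescale by $x\mapsto x/(2\pi)$, which identifies $L^{2}(\SS)$ with $L^{2}(V)$ for the open set $V=\SS/(2\pi)\subset[0,1]$ and turns $E(\Lambda(\alpha,I))$ into $E(2\pi\Lambda(\alpha,I))$ in the $[0,1]$ normalization used by Lemma \ref{l4.4}. Since $V$ is open with $|V|>|I|$, I can pick a semi-closed multiband set $V_{0}\subset V$ (a finite union of half-open intervals contained in $V$) still satisfying $|V_{0}|>|I|$. The quasicrystal $\Lambda(\alpha,V_{0})$ is contained in $\ZZ$, hence uniformly discrete, and by Weyl's equidistribution of $\{\alpha n\}$ it has uniform density $D(\Lambda(\alpha,V_{0}))=|V_{0}|$. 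Because $I$ is an interval and $|V_{0}|>|I|$, Beurling's sufficient condition for frames cited in the excerpt implies that $E(2\pi\Lambda(\alpha,V_{0}))$ is a frame in $L^{2}(I)$. I then apply Lemma \ref{l4.4}(1) with $U=I$ (which is semi-closed multiband, being a single half-open interval) and with the lemma's multiband set taken to be $V_{0}$; the conclusion is that $E(-2\pi\Lambda(\alpha,I))$ is a Riesz sequence in $L^{2}(V_{0})$.

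To finish, since $V_{0}\subset V$ the lower Riesz bound transfers to $L^{2}(V)$ from the pointwise inequality $\|f\|_{L^{2}(V_{0})}\leq\|f\|_{L^{2}(V)}$, while the upper Riesz bound in $L^{2}(V)$ is automatic because $\Lambda(\alpha,I)\subset\ZZ$ and $\{e^{2\pi iny}\}_{n\in\ZZ}$ is orthonormal in $L^{2}([0,1])$. Taking complex conjugates replaces $E(-2\pi\Lambda(\alpha,I))$ by $E(2\pi\Lambda(\alpha,I))$ without affecting the Riesz property, and undoing the initial rescaling yields the theorem. The main obstacle is the inner approximation $V_{0}\subset V$ with $|V_{0}|>|I|$: the openness of $\SS$ is precisely what makes this possible and what forces Beurling's density inequality to hold strictly so that Lemma \ref{l4.4}(1) applies. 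Without openness this inner approximation can fail, in agreement with the negative result of Theorem \ref{t4.1} for arbitrary measurable spectra.
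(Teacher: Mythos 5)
Your argument is exactly the route the paper indicates for this theorem (which it attributes to Matei--Meyer and only sketches): rescale, inner-approximate the open spectrum by a multiband set of measure still exceeding $\left|I\right|$, invoke Beurling's sufficient density condition to get a frame in $L^{2}(I)$, and dualize via Lemma \ref{l4.4}(1), with the monotonicity of the lower Riesz bound under enlarging the domain and the separatedness of $\ZZ$ handling the final transfer. The details you supply (semi-closedness of $I=[a,b)$, uniform density of $\Lambda(\alpha,V_{0})$ from equidistribution, and the conjugation to pass from $-2\pi\Lambda$ to $2\pi\Lambda$) are all correct, so this is essentially the same proof.
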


\begin{proof}[Proof of Theorem \ref{B}]
Let $n\geq2$ be an integer such that $\frac1{n} < \frac{|S|}{2\pi} \le \frac1{n-1}$. Fix an irrational $\frac{1}{n}<a<\frac{1}{n+1}$ such that 
$\frac{|\SS|}{2\pi}>a$.
Set the irrational $\alpha=\frac{1-a}{n-1}$ and define $I=\left[0,a\right)$.
Now consider the simple quasicrystal
\[
	\Lambda (\alpha,I )=\{ \lambda_{j} :j\in\ZZ \}
\qquad\text{where }\lambda_j<\lambda_{j+1} \text{ for all }j\in\ZZ. 
\]
By Theorem \ref{t4.5} $E\left(\Lambda\left(\alpha,I\right)\right)$ is a
Riesz sequence in $L^2(\SS)$ since $\frac{\left|\SS\right|}{2\pi}>\left|I\right|=a$.
Moreover, if for some
$k\in\ZZ$ 
\[
	0\leq\left\{ \alpha k\right\} <a\quad\mbox{and}\quad\,a\leq\left\{ \alpha\left(k+1\right)\right\} <1 
	\]
	then by the choice of $\alpha$ we have 
\[
	a\leq\left\{\alpha\left(k+j\right)\right\} <1 \qquad \mbox{for } j\in\{2,\ldots,n-1\}.
\]
Since $\alpha<a$ we also have
\[
	\left\{ \alpha\left(k+n\right)\right\} \in I. 
\]
This implies that 
\[
	\lambda_{j+1}-\lambda_{j}\in\left\{ 1,n\right\} \qquad\text{for all }j\in\mathbb Z.
\]

For the moreover part we let $n\ge 2$ be such that $1-\frac1{n} < \frac{|S|}{2\pi} \le  1- \frac1{n+1}$. Fix an irrational $\frac{1}{n+1}<a<\frac{1}{n}$ such that $\frac{|S|}{2\pi} > 1-a$. We set $\alpha=\frac{1-a}{n-1}$ and $I=[0,a)$ as above. Thus, by Theorem \ref{t4.5} $E\left(\Lambda\left(\alpha,I^{c}\right)\right)$ is a Riesz sequence in $L^2(\SS)$, where $I^{c}=[a,1)$. Since $\alpha>a$, if $\{\alpha k\} \in I$ for some $k\in I$, then $\{\alpha(k+1)\} \in I^c$ and consequently we have
\[
\{\alpha (k+j) \} \in I^c \qquad \text{for } j\in\{1,\ldots,n-1\}.
\]
Hence, the difference between consecutive elements in $\Lambda(\alpha,I)$ is bounded below by $n$.
Since
\[
	\Lambda\left(\alpha,I^{c}\right)=\ZZ\backslash\Lambda\left(\alpha,I\right)
\]
we deduce that $\Lambda\left(\alpha,I^{c}\right)$ satisfies \eqref{eq:7}.
\end{proof}

There are still a few open problems left with regards to the subject of syndetic Riesz sequences. Below we give a couple of them. \par

\textbf{Open problem 1.}
By Theorem \ref{t4.1}, $E\left(\Lambda\left(\alpha,I\right)\right)$
is not a Riesz sequence in $L^{2}\left(\SS\right)$ for
some non-open set $\SS \subset \TT$ with $\frac{\left|\SS\right|}{2\pi}>\left|I\right|$. Does there exist $\SS$ with 
empty interior for which $E\left(\Lambda\left(\alpha,I\right)\right)$ 
is a Riesz sequence? Otherwise prove that such set does not exist.

\textbf{Open problem 2.} Does the moreover part of
Theorem \ref{B} hold in the general case? That is, an arbitrary measurable set $\SS \subset \TT$
with $\frac{|\SS|}{2\pi}>\frac{1}{2}$ admits an exponential Riesz sequence
$E\left(\Lambda\right)$ such that $\Lambda^{c}=\ZZ\backslash\Lambda$
satisfies
\[
\inf_{\lambda,\mu\in\Lambda^{c},\lambda\neq\mu}\left|\lambda-\mu\right|\geq\frac{C}{\left|\SS^{c}\right|}.
\]
\par

\end{document}